\documentclass[11pt]{article}
\usepackage[margin=1in]{geometry}
\usepackage[T1]{fontenc}
\usepackage{lmodern,microtype,amsmath,amssymb,amsthm,mathtools}
\usepackage[authoryear,round]{natbib}
\usepackage[colorlinks=true,allcolors=blue]{hyperref}
\newtheorem{theorem}{Theorem}
\newtheorem{lemma}[theorem]{Lemma}
\newtheorem{corollary}[theorem]{Corollary}
\theoremstyle{definition}
\newtheorem{definition}[theorem]{Definition}
\newenvironment{proofof}[1]{\begin{proof}[Proof of #1]}{\end{proof}}
\newcommand{\E}{\mathbb E}
\newcommand{\R}{\mathbb R}

\newcommand{\elic}{\operatorname{elic}}
\newcommand{\iden}{\operatorname{iden}}
\hypersetup{pdftitle={An Unbounded Gap Between Identification and Identifiable Elicitation: Central Moments},pdfauthor={Jingyi Jia},pdfkeywords={elicitation complexity, identification, central moments}}
\title{An Unbounded Gap Between Identification and Identifiable Elicitation: Central Moments}
\author{Jingyi Jia\\[0.3em]{\normalsize University of Chicago}}
\date{October 11, 2026}
\begin{document}
\maketitle
\begin{abstract}
For $n\geq2$, two expectation equations identify the mean together with the $n$th central
moment. How many coordinates must a report have if a one-observation loss
uniquely elicits it and the report determines that moment? We prove that
$n$ coordinates are necessary and sufficient among all identifiable
intermediaries. The result holds on every open convex set of positive
distributions on any $n+1$ distinct real outcomes. It requires no regularity
of the loss, report map, identification function, or link. Thus
$\elic_{\mathcal I}(\mu_n)=n$ while $\iden(\mu_n)\leq2$, giving an unbounded
gap between identifiable elicitation and indirect identification.
This resolves the central-moment conjecture of Frongillo and Kash on the
stated domains. The proof constructs a $C^1$ manifold of supporting
coefficients from the affine fibers of a convex negative Bayes risk.
Translations along that manifold force an $n$-dimensional tangent space.
Standardized moments of order at least three have the same exact complexity.
The bound also extends
to convex distribution classes with a full-dimensional raw-moment image.
\end{abstract}
\par\noindent\textbf{Keywords.}
elicitation complexity, identification, central moments, Bayes risk, convex analysis
\par

\section{Introduction}
A loss elicits a statistic when its expected value has a unique minimum at
that statistic. When no such loss exists, one can elicit an auxiliary vector
and compute the target from it. Elicitation complexity measures the smallest
possible dimension of that vector. The choice of auxiliary reports matters.
Without restrictions, a real number can encode an entire distribution
\citep{FK2021}. We study identifiable reports, whose values are characterized
by the zeros of expectation equations.

For a real random variable $Y$, the $n$th central moment is
\begin{equation}\label{eq:central}
 \mu_n(P)=\E_P[(Y-\E_PY)^n].
\end{equation}
The first $n$ raw moments give an elicitable and identifiable report for
$\mu_n$. In contrast, the pair $(\E_PY,\mu_n(P))$ has a two-coordinate
identification function. This leaves a basic question. Can a lower-dimensional
identifiable report also arise as the unique minimizer of an expected loss?

\citet[p.~8, immediately after Corollary~2]{FK2015pre} conjectured that the
answer is no below dimension $n$. They also noted that this bound would
separate identification from elicitation by an arbitrarily large amount.
\citet{FW2024} record the central-moment gap among the remaining questions
in elicitation complexity. We prove the conjectured bound on every open
convex subset of the positive simplex over $n+1$ distinct real outcomes.
Writing $\iden$ for indirect identification complexity, our result gives
\begin{equation}\label{eq:headline}
 \elic_{\mathcal I}(\mu_n)=n,\qquad \iden(\mu_n)\leq2,\qquad
 \elic_{\mathcal I}(\mu_n)-\iden(\mu_n)\geq n-2.
\end{equation}
Section~\ref{sec:setting} defines both quantities. The same statement holds
on convex distribution classes whose raw-moment vectors have affine hull
$\R^n$, under the natural integrability condition.
These classes can consist entirely of distributions with densities, so the
result is not restricted to discrete distributions.

The lower bound covers arbitrary jointly elicitable identifiable reports.
Their coordinates need not be separately elicitable or consist of selected
moments. The loss, report map, identification function, and final link need
no continuity, differentiability, or definability. Expected loss must have
a unique attained minimum, and each loss uses one observation.
The gap in \eqref{eq:headline} concerns this decision-theoretic representation.
It is not a bound on sample size, memory, or the dimension used by arbitrary
estimation algorithms.

The proof combines two constraints on reports. Identification makes every
report fiber a full affine flat intersected with the distribution domain.
Unique elicitation makes these fibers the affine pieces of one convex
negative Bayes risk. Fixing the mean and central moment leaves a local
affine flat of dimension $n-2$. Thus a target level set can accommodate a
fiber of codimension two, even when the sharp elicitation bound is $n$.
We must therefore use compatibility across fibers. This lets us construct a $C^1$ manifold
of supporting coefficients even when the original reports have no regularity.
Mean recovery then produces a fixed tangent direction. Local translations
preserve a tangent space while the recovered mean varies over an interval.
The central-moment gradients over that interval span $\R^n$.

\citet[Proposition~5]{FK2015pre} already supply the polynomial argument for
mean recovery. Our contribution is the dimension argument across all report
fibers, with the dual coefficient graph in Lemma~\ref{lem:dualgraph} as its
main step. Earlier conditional-elicitation and vector-valued arguments study
particular intermediate properties under additional assumptions
\citep{FK2015pre,FK2015vector}. They do not rule out every lower-dimensional
identifiable intermediary. Section~\ref{sec:discussion} compares the precise
complexity classes and the one-observation restriction.

Section~\ref{sec:geometry} constructs the dual coefficient graph.
Section~\ref{sec:lowerbound} proves the lower bound from mean recovery and a
Vandermonde span. Section~\ref{sec:consequences} treats standardized moments
and broader distribution classes. Appendices~\ref{app:smooth} and
\ref{app:definable} give complete proofs for two further classes of reports,
without requiring identification.

\section{Setting and main results}\label{sec:setting}
Fix $n\geq2$ and distinct outcomes $y_0,\ldots,y_n\in\R$.
Positive probability vectors on these outcomes form an open simplex.
Use raw moments as its coordinates:
\begin{equation}\label{eq:coordinates}
 x_j=\sum_{i=0}^n p_i y_i^j,\qquad j=1,\ldots,n.
\end{equation}
The Vandermonde matrix is invertible. Thus these coordinates identify the
simplex with an open convex set in $\R^n$. Let $U$ be any nonempty open
convex subset of this moment set. Write $P_x$ for the distribution at $x$.

\begin{definition}\label{def:identifiable}
An identifiable indirect elicitation of $T:U\to\R$ consists of a report
map $\Gamma:U\to\mathcal R\subseteq\R^d$, a finite real loss
$L:\mathcal R\times\{y_0,\ldots,y_n\}\to\R$, and a link
$\psi:\Gamma(U)\to\R$. It satisfies
\begin{equation}\label{eq:identelicitation}
 \{\Gamma(x)\}=\arg\min_{r\in\mathcal R}\E_{P_x}L(r,Y),
 \qquad T(x)=\psi(\Gamma(x)).
\end{equation}
There is also an identification function
$V:\Gamma(U)\times\{y_0,\ldots,y_n\}\to\R^d$ such that
\begin{equation}\label{eq:identifies}
 \E_{P_x}V(r,Y)=0\quad\Longleftrightarrow\quad\Gamma(x)=r
 \qquad(r\in\Gamma(U),\ x\in U).
\end{equation}
The identifiable elicitation complexity $\elic_{\mathcal I}(T)$ is the smallest such $d$.
\end{definition}

The definition requires neither regular report labels nor a regular report
domain. The identification function has $d$ output coordinates but need
not have full rank. No regularity assumption is imposed on $L$, $V$,
$\Gamma$, or $\psi$. Finite support makes all these expectations finite.

\begin{theorem}\label{thm:identifiablecentral}
For every $n\geq2$, the identifiable elicitation complexity of $\mu_n$
on $U$ is exactly $n$.
\end{theorem}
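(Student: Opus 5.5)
The upper bound is direct, so most of the work goes into the lower bound.

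\emph{Upper bound.} Take $\Gamma(x)=x$, the raw moment vector. It is elicited uniquely by the squared loss $L(r,y)=\sum_{j=1}^n (r_j-y^j)^2$ and identified by $V(r,y)=(y^j-r_j)_{j=1}^n$. The link $\psi$ is the polynomial expressing $\mu_n$ in terms of $x_1,\ldots,x_n$ via the binomial expansion of $(Y-x_1)^n$. This gives $\elic_{\mathcal I}(\mu_n)\le n$.

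\emph{Lower bound, first step.} Suppose $(\Gamma,L,\psi,V)$ has dimension $d$. I would show that $d\ge n$ in four stages.

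\textbf{Fibers are relatively open affine pieces.} Identification implies that each fiber $F_r=\Gamma^{-1}(r)$ equals $A_r\cap U$, where $A_r=\{x:\E_{P_x}V(r,Y)=0\}$. Since $\E_{P_x}V(r,Y)$ is affine in $x$ (it is linear in $p$), $A_r$ is an affine flat. Its codimension is at most $d$, because $V$ has $d$ coordinates.

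\textbf{Fibers are pieces of one convex function.} Let $G(x)=-\min_r \E_{P_x}L(r,Y)$. This is a supremum of affine functions of $x$, so it is convex. Because the minimizer is unique, $G$ agrees with the affine function $\ell_r(x)=-\E_{P_x}L(r,Y)$ exactly on the fiber $F_r$, and $\ell_r<G$ off it. So $\ell_r$ supports $G$, and the coefficient vector $c(r)\in\R^n$ of $\ell_r$ is a subgradient of $G$ at every point of $F_r$.

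\textbf{Constructing a $C^1$ dual graph.} Compatibility of distinct supporting planes should show that $G$ is differentiable on a dense open set. At such points $\nabla G$ is constant along each flat $F_r$, and the map $x\mapsto\nabla G(x)$ factors through $\Gamma$. The image $M=\{c(r)\}$ should then be locally a $C^1$ graph of dimension $n-\dim F_r\le d$ over a suitable coordinate subspace. This is the content of Lemma~\ref{lem:dualgraph}, which I would invoke.

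\textbf{Mean recovery.} Since $\mu_n=\psi\circ\Gamma$, the level sets of $\mu_n$ are unions of fibers. Following \citet[Proposition~5]{FK2015pre}, the polynomial structure of $\mu_n$ forces each flat $A_r$ to lie in a set where $x_1$ is constant as well, at least on an open region. A line inside a level set of $\mu_n$ that varies the mean would make the degree-$n$ polynomial constant along it, and one can check this is impossible. So $\Gamma$ determines the mean, and every fiber is contained in $\{x_1=m,\ \mu_n=c\}$. That set is a curved hypersurface inside a hyperplane, so the fibers are flats tangent to it.

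\textbf{Translations and the Vandermonde span.} Shifting the outcome distribution by $t$ changes the mean by $t$ and leaves central moments fixed. In raw-moment coordinates this acts by an affine (triangular) map that locally preserves $U$. Compatibility with $G$ across fibers should let me show that the tangent space $T$ of $M$ is constant along the orbit, which yields the fixed tangent direction. Each fiber's flat must be orthogonal, within $T^\perp$ duality, to both $\nabla x_1$ and $\nabla\mu_n$ at its points. Letting the recovered mean run over an interval, the gradients $\nabla\mu_n$ at the translated points span a space containing vectors of the form $\bigl(\binom{n}{j}(-m)^{n-j}\bigr)_j$ for varying $m$. A Vandermonde argument in $m$ shows these span $\R^n$. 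Hence $\dim T=n$, and since $\dim M\le d$, we get $d\ge n$.

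The main obstacle is the dual graph step: extracting a $C^1$ manifold of supporting coefficients, and a constant tangent space along translations, with no regularity assumed on $\Gamma$ or $L$. I would attack it through the Alexandrov/Rademacher differentiability of convex $G$ together with the affine-fiber structure, and I would rely on Lemma~\ref{lem:dualgraph} for it.
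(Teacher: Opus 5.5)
\textbf{The gap is in your last step.} The Vandermonde argument gives a contradiction only if $e_1$ and $b_n(m)$, for all $m$ in an interval, lie in \emph{one} tangent space of the coefficient manifold. Equivalently, you need a family of parallel fibers whose means vary.

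You propose to obtain this from the location shift $Y\mapsto Y+t$, i.e.\ the affine map $S_t$ on moment coordinates. But $S_t$ is a symmetry of $\mu_n$ only. The loss, $\Gamma$, and $G$ are arbitrary and have no equivariance under $S_t$; the shifted law is not even supported on $y_0,\dots,y_n$. So nothing relates the fiber through $x$ to the fiber through $S_tx$.

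Even an $S_t$-equivariant report would only give $K(S_tx)=DS_t\,K(x)$. Its normal space contains $\mathrm{span}(e_1,b_n(m+t))$ rather than $\mathrm{span}(e_1,b_n(m))$, and for $n\geq3$ these differ. The orbit transports the tangent space instead of fixing it. ``Compatibility with $G$'' therefore cannot supply the common tangent space, and the bound $d\geq n$ does not follow.

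\textbf{The missing idea is to translate in coefficient space, not in distribution space.} Mean recovery puts $e_1$ in $K(x)^\perp=T_aS$ at every point of the graph in Lemma~\ref{lem:dualgraph}. Hence $A(t)^{\mathsf T}e_1=0$, and by \eqref{eq:dualgraph}, $Dq(p)e_1=0$. On a rectangle around $p_0$, $q$ therefore does not depend on $p_1$. So $a(p_0+\tau e_1)=a(p_0)+\tau e_1$ stays in $S$ with an unchanged tangent space.

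Constancy of the tangent space is then free. What needs proof is that the mean of $t(p_0+\tau e_1)$ actually moves. If it did not, the two subgradient inequalities for $g$ would have total gap $\tau e_1\cdot(t(p_0+\tau e_1)-t_0)=0$. Strict convexity of $g$ then forces $t(p_0+\tau e_1)=t_0$, and differentiability of $g$ at $t_0$ forces $\tau=0$. Your route has the opposite profile: the mean moves trivially, but the common tangent space is unsupported.

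\textbf{Smaller points.}

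First, $G$ need not be differentiable on any open set, and $\{c(r)\}$ need not be a manifold. Take $h$ strictly convex with kinks at a dense set and $s(r)\in\partial h(r)$. The loss $L(r,y)=-h(r)-s(r)(y-r)$ uniquely elicits the identifiable mean, yet $G(x)=h(x_1)$. Moreover, $\{c(r)\}=\{s(r)e_1\}$ omits most of each subdifferential interval at a kink. Lemma~\ref{lem:dualgraph} avoids this by using all subgradients $(p,q(p))$, $p\in P$. It also needs the constant-dimension patch from Lemma~\ref{lem:continuousfibers}, which you do not mention.

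Second, $\{x_1=m,\ \mu_n=c\}$ is not curved. For fixed $x_1$, $F_n$ is affine in $(x_2,\dots,x_n)$, so this set is an $(n-2)$-dimensional affine flat. That is exactly why a single fiber yields no contradiction and a cross-fiber argument is needed.
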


For comparison, the \emph{indirect identification complexity} $\iden(T)$
is the smallest $d$ for which $T=\psi\circ\Gamma$ and
\eqref{eq:identifies} holds for some $\Gamma:U\to\R^d$ and $V$.
This definition does not require a loss that elicits $\Gamma$.
As in Definition~\ref{def:identifiable}, the identification function has
$d$ coordinates, and the link is arbitrary.

\begin{corollary}[An unbounded gap]\label{cor:gap}
For every $n\geq2$ on the domains above,
\[
 \iden(\mu_n)\leq2,\qquad
 \elic_{\mathcal I}(\mu_n)-\iden(\mu_n)\geq n-2.
\]
Consequently the gap is unbounded as the moment order increases.
\end{corollary}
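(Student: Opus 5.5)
The plan is to combine Theorem~\ref{thm:identifiablecentral} with an explicit two-coordinate identification. Theorem~\ref{thm:identifiablecentral} already gives $\elic_{\mathcal I}(\mu_n)=n$ on $U$. So the only new work is the upper bound $\iden(\mu_n)\leq2$, followed by subtraction.

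For the upper bound I will take the report map $\Gamma:U\to\R^2$ defined by $\Gamma(x)=(\E_{P_x}Y,\mu_n(P_x))$, which in coordinates is $(x_1,\mu_n(P_x))$. The link is the coordinate projection $\psi(r_1,r_2)=r_2$, so $\mu_n=\psi\circ\Gamma$ holds immediately. The identification function is
\[
 V\bigl((r_1,r_2),y\bigr)=\bigl(y-r_1,\ (y-r_1)^n-r_2\bigr).
\]
The key check is the equivalence \eqref{eq:identifies}, which I will verify in two directions.

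First, suppose $\E_{P_x}V(r,Y)=0$. The first coordinate forces $r_1=\E_{P_x}Y$. Substituting this value into the second coordinate gives $r_2=\E_{P_x}[(Y-\E_{P_x}Y)^n]=\mu_n(P_x)$, so $r=\Gamma(x)$.

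Conversely, if $r=\Gamma(x)$, then both coordinates of $\E_{P_x}V(r,Y)$ vanish by the definition \eqref{eq:central}. All expectations are finite because the support is finite. The equivalence holds for every $r\in\Gamma(U)$, and indeed for every $r\in\R^2$. Hence $\iden(\mu_n)\leq2$.

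Finally, $\elic_{\mathcal I}(\mu_n)-\iden(\mu_n)\geq n-2$ follows, and $n-2\to\infty$ as $n\to\infty$. There is no real obstacle here: all of the difficulty lies in Theorem~\ref{thm:identifiablecentral}. The one point to take care over is that the first coordinate pins down $r_1$ before the second coordinate is evaluated. This order is what makes the nonlinear second equation identify $\mu_n$, rather than the $n$th moment about an arbitrary point $r_1$.
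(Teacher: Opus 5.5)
Your proposal is correct and matches the paper's proof. It uses the same report $(\E_P Y,\mu_n(P))$ with identification function $V((m,c),y)=(y-m,(y-m)^n-c)$ and subtracts using Theorem~\ref{thm:identifiablecentral}. Your two-direction verification just spells out what the paper states in one line.
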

\begin{proof}
The report $(m,c)=(\E_PY,\mu_n(P))$ has identification function
\begin{equation}\label{eq:identification}
 V((m,c),y)=\bigl(y-m,(y-m)^n-c\bigr).
\end{equation}
Its expectation vanishes exactly at that report. Apply
Theorem~\ref{thm:identifiablecentral} to obtain the gap.
\end{proof}

For example, take $n=4$. Two expectation equations identify the mean and fourth central
moment. Yet any identifiable report that determines the fourth central
moment and uniquely minimizes a one-observation expected loss needs at least
four coordinates on $U$.

The matching elicitation upper bound uses the raw-moment report and loss
\begin{equation}\label{eq:upper}
 \mathcal R=\R^n,\qquad L(r,y)=\sum_{j=1}^n(r_j-y^j)^2.
\end{equation}
The unique optimal report is $(x_1,\ldots,x_n)$.
It is identifiable by $V_j(r,y)=y^j-r_j$, and it determines $\mu_n$.
Finite support makes each expected loss finite.

For $n\geq3$, define the standardized central moment
\begin{equation}\label{eq:standardized}
 s_n(x)=\frac{\mu_n(x)}{\mu_2(x)^{n/2}}.
\end{equation}
Every distribution in $U$ has positive variance.

\begin{theorem}\label{thm:identifiablestandardized}
For every $n\geq3$, the identifiable elicitation complexity of $s_n$
on $U$ is exactly $n$.
\end{theorem}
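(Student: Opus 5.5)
The upper bound is immediate: the raw-moment report and loss \eqref{eq:upper} are identifiable by $V_j(r,y)=y^j-r_j$. The report $(x_1,\dots,x_n)$ determines $\mu_2$ and $\mu_n$ polynomially. Since every distribution in $U$ has positive variance, $s_n=\psi(x)$ with $\psi(x)=\mu_n(x)/\mu_2(x)^{n/2}$ is a well-defined link. So $\elic_{\mathcal I}(s_n)\le n$.

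For the lower bound I would rerun the argument behind Theorem~\ref{thm:identifiablecentral}, isolating which properties of $\mu_n$ it used. Let $(\Gamma,L,\psi,V)$ be an identifiable indirect elicitation of $s_n$ with report dimension $d$. The geometric part of the proof of Theorem~\ref{thm:identifiablecentral} does not involve the target at all. Identification makes the fibers of $\Gamma$ affine flats intersected with $U$, and unique elicitation makes them affine pieces of a convex negative Bayes risk. The dual coefficient graph of Lemma~\ref{lem:dualgraph} then yields, from any $d<n$, a $C^1$ manifold of supporting coefficients of dimension at most $d$. Only two target-specific inputs remain.

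\textbf{(a) Mean recovery.} I would show that the report determines the mean. That is, $\Gamma$ is constant only on sets where $\E Y$ is constant, or at least the fibers are contained in level sets of $m=x_1$. In the central-moment case this is the polynomial argument of \citet[Proposition~5]{FK2015pre}: a fiber is a flat on which the target is constant, and one checks that such a flat cannot move the mean. For $s_n$ the analogous claim is the following. An affine segment $x+tv\subset U$ along which $\mu_n/\mu_2^{n/2}$ is constant must have $v_1=0$. Along the segment, $\mu_n(x+tv)$ and $\mu_2(x+tv)$ are polynomials in $t$, with $\mu_2$ quadratic. Constancy gives $\mu_n(t)^2=c^2\mu_2(t)^n$ as a polynomial identity. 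If $v_1\neq0$, the leading term of $\mu_n$ in $t$ is $(-v_1)^n(1-n)t^n$ up to lower order, which comes from the $(-m)^n$ and $-n m^{n-1}x_1$ type terms. The leading term of $\mu_2$ is $-v_1^2t^2$. I would compare roots rather than leading coefficients: $\mu_2(t)$ vanishes at a real root $t_0$ (it is a concave quadratic that is positive on the interval), so $\mu_n(t_0)=0$ as well. Factoring repeatedly should force $\mu_n$ to be a constant times $\mu_2^{n/2}$ as polynomials in $t$. I would then derive a contradiction by comparing the coefficient structure, using that $\mu_n$ involves $v_n$ linearly while $\mu_2$ does not. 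When $v_n\ne 0$, the $t$-linear dependence on $x_n$ breaks proportionality. The degenerate subcases need care, and this is the step I expect to be the main obstacle.

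\textbf{(b) Gradient span.} Having fixed a tangent space $W$ of dimension $d<n$ that is preserved by local translations while the recovered mean varies over an interval, Theorem~\ref{thm:identifiablecentral} concludes by showing that the gradients of $\mu_n$ over that interval span $\R^n$. Via a Vandermonde determinant, this contradicts their all annihilating a common nonzero direction orthogonal to $W$. For $s_n$,
\[
\nabla s_n=\mu_2^{-n/2}\Bigl(\nabla\mu_n-\tfrac{n}{2}\tfrac{\mu_n}{\mu_2}\nabla\mu_2\Bigr).
\]
Both $\nabla\mu_2$ and $\nabla\mu_n$ have the explicit form of polynomials in $m$ in the raw-moment coordinates. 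The $e_n$ component comes only from $\nabla\mu_n$ (coefficient $1$), and the lower components are polynomials in $m$ of increasing degree. The same Vandermonde argument should therefore show that these gradients, taken at $n$ distinct mean values along the translated family, span $\R^n$. This holds provided $n\ge3$, so that the $\nabla\mu_2$ correction only perturbs the coordinates $e_1,e_2$ and does not cancel the top-degree structure. This contradiction forces $d\ge n$, and together with the upper bound it gives exactly $n$.
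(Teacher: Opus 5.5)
\textbf{There is a genuine gap: your step (a) is false as stated, and the failure is not confined to degenerate subcases.}

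For even $n$, the identity $\mu_n(t)=s\,\mu_2(t)^{n/2}$ along the segment is already a polynomial identity. Comparing $t^n$ coefficients gives $(1-n)v_1^n=s(-1)^{n/2}v_1^n$. This forces $v_1=0$ only when $s\neq(-1)^{n/2+1}(n-1)$.

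For $n\equiv2\pmod4$ the excluded value is $n-1$, which is an attainable standardized moment. At that level the level set of $s_n$ really does contain segments along which the mean moves. The coefficients $v_3,\dots,v_n$ can simply be solved for, so the linear appearance of $v_n$ gives no contradiction.

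Concretely, take $n=6$ and a mean-zero point of $U$ whose central moments satisfy $\alpha_6=5\alpha_2^3$ (such points exist for suitable outcomes and $U$). Consider the line
\[
 x(m)=\bigl(m,\ \alpha_2,\ \alpha_3,\ \alpha_4+\tfrac43\alpha_3m,\ \alpha_5+\tfrac52(\alpha_4+\alpha_2^2)m,\ \alpha_6+6\alpha_5m\bigr).
\]
Along it, $\mu_2=\alpha_2-m^2$ and $\mu_6=-5m^6+15\alpha_2m^4-15\alpha_2^2m^2+\alpha_6=5\mu_2^3$. So $s_6\equiv5$ while the mean equals $m$.

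This matters because the argument of Theorem~\ref{thm:identifiablecentral} needs constant means on every fiber in the patch, for instance to get $A(t)^{\mathsf T}e_1=0$. The missing idea is localization. Since $\partial s_n/\partial x_n>0$, you can pass to an open convex ball on which $s_n$ avoids the exceptional value. Restriction preserves identification and unique elicitation, and the lower-bound argument is local. For odd $n$ and for $n\equiv0\pmod4$ your global claim does hold: comparing leading coefficients, after squaring when $n$ is odd, already excludes $v_1\neq0$.

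\textbf{Step (b) is also unjustified at the one place that matters.}

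After removing $e_1$, the vector $\mu_2^{n/2}\nabla s_n$ has $e_2$-coordinate $\binom n2(-m)^{n-2}-\frac n2\,\mu_n/\mu_2$. Along the translated family, $\mu_n/\mu_2$ is not a known function of $m$. The Vandermonde argument controls only coordinates $3,\dots,n$. The $\nabla\mu_2$ correction sits exactly in the coordinate carrying the top power $m^{n-2}$, and nothing you say prevents it from cancelling that power. In that case you would obtain only $\dim S\ge n-1$, which does not exclude $d=n-1$.

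The missing fact is the second half of Lemma~\ref{lem:standardreduction}. Once the mean is constant along a fiber segment, $\mu_2$ and $\mu_n$ are affine in $t$. Constancy of $\mu_n/\mu_2^{n/2}$ with $n\ge3$ then forces $\mu_n$ to be constant, and $\mu_2$ as well unless $s=0$. So on the localized ball the report determines $\mu_n$, and Theorem~\ref{thm:identifiablecentral} applies verbatim.

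With both repairs your argument becomes exactly the paper's proof: reduce $s_n$ to $\mu_n$ on a ball, then invoke the central-moment theorem. No separate gradient computation for $s_n$ is needed.
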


In particular, skewness requires three coordinates and kurtosis requires
four. Subtracting three from kurtosis leaves its complexity unchanged,
since that transformation is invertible. The central-moment lower bound is
local, so it also applies on arbitrarily small open convex neighborhoods.

\section{Geometry of identifiable reports}\label{sec:geometry}
The argument has two parts. First, we construct a smooth manifold in the
space of supporting coefficients, using only identification and unique
elicitation. Second, we use the central-moment polynomial to force that
manifold to have dimension at least $n$. The present section proves the
first part. A fiber means the set of distributions with one fixed report.

The following geometry uses no regularity of the loss or report labels. Write the negative expected
loss and the negative Bayes risk as
\begin{equation}\label{eq:bayesconvex}
 \ell_r(x)=-\E_{P_x}L(r,Y),\qquad f(x)=\sup_r\ell_r(x).
\end{equation}
Each $\ell_r$ is affine. Unique attained minima make $f$ finite on $U$.
Thus $f$ is convex, continuous, and locally Lipschitz.
A subgradient $a\in\partial f(x)$ satisfies
$f(z)\geq f(x)+a\cdot(z-x)$ for every $z\in U$.
The affine-segment observation below is the negative-Bayes-risk form of the strict concavity lemma in
\citet[Lemma 8]{FK2021}.

\begin{lemma}[Full affine fibers]\label{lem:affinefibers}
Under Definition~\ref{def:identifiable}, $f$ is affine on $[x,z]$ if and
only if $\Gamma(x)=\Gamma(z)$. Each report fiber has the form
\begin{equation}\label{eq:fullfiber}
 F_r=U\cap(x_r+K_r),\qquad \dim K_r\geq n-d.
\end{equation}
The full subdifferential $\partial f$ is constant on each fiber.
\end{lemma}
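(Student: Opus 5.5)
We prove the three assertions in order: the segment characterization first, then the fiber shape from identification, and finally constancy of the subdifferential. Throughout, write $r_x=\Gamma(x)$. Unique attainment gives $f(x)=\ell_{r_x}(x)$ and $\ell_r(x)<f(x)$ for every $r\neq r_x$.

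\emph{Segment characterization.} Suppose $\Gamma(x)=\Gamma(z)=r$. The affine function $\ell_r$ agrees with $f$ at both endpoints, and $\ell_r\le f$ everywhere. Convexity of $f$ gives $f\le\ell_r$ on $[x,z]$, so $f=\ell_r$ on the segment. For the converse, suppose $f$ is affine on $[x,z]$ and let $w$ be an interior point, $w=tx+(1-t)z$ with $0<t<1$. Put $r=\Gamma(w)$. Then $\ell_r(w)=f(w)=tf(x)+(1-t)f(z)$. We also have $\ell_r(w)=t\ell_r(x)+(1-t)\ell_r(z)$ with $\ell_r\le f$ at both endpoints. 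Hence $\ell_r(x)=f(x)$ and $\ell_r(z)=f(z)$. Uniqueness of the minimizer then forces $\Gamma(x)=r=\Gamma(z)$.

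\emph{Fiber shape.} Identification is the source of flatness. The expectation $\E_{P_x}V(r,Y)$ is affine in $x$, because $P_x$ depends affinely on the raw-moment coordinates through the inverse Vandermonde map. So \eqref{eq:identifies} states that $F_r$ is the zero set in $U$ of an affine map $x\mapsto A_rx+b_r$ with $A_r$ an $n\times d$-type matrix of $d$ rows. Therefore $F_r=U\cap(x_r+\ker A_r)$, where $x_r$ is any point of $F_r$, which is nonempty for $r\in\Gamma(U)$. Set $K_r=\ker A_r$. Since $A_r$ has $d$ rows, rank--nullity gives $\dim K_r\ge n-d$. This holds even when $A_r$ has deficient rank, which explains why the statement only gives an inequality.

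\emph{Constant subdifferential.} Take $x,z\in F_r$, with $U$ convex so that $[x,z]\subseteq F_r$. Let $a\in\partial f(x)$. On the relative interior of the flat $F_r$ we know $f=\ell_r$, which is affine, and $f\ge f(x)+a\cdot(\cdot-x)$. The difference $g=f-f(x)-a\cdot(\cdot-x)$ is therefore affine and nonnegative on the relatively open convex set $F_r$, and $g(x)=0$. If $x$ lies in the relative interior of $F_r$, then $g\equiv0$ on $F_r$, because a nonnegative affine function vanishing at a relative interior point of a convex set vanishes on that set. Since $F_r=U\cap(x_r+K_r)$ with $U$ open, $F_r$ is relatively open, so every point, including $x$, is a relative interior point. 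Hence $g(z)=0$, i.e.\ $f(z)=f(x)+a\cdot(z-x)$. For any $w\in U$ this gives
\[
 f(w)\ge f(x)+a\cdot(w-x)=f(z)+a\cdot(w-z),
\]
so $a\in\partial f(z)$. By symmetry $\partial f(x)=\partial f(z)$.

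The main obstacle is making sure the identification argument really yields a full affine flat intersected with $U$, with no smaller convex piece. This relies on \eqref{eq:identifies} being an equivalence for all $x\in U$, which is exactly what the hypothesis provides.
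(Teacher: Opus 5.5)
Your proposal is correct and follows the paper's proof essentially step for step: the same supporting-affine-function argument for the segment criterion (you use a general interior point where the paper uses the midpoint), the same affine zero-set argument from \eqref{eq:identifies}, and the same use of relative openness of $F_r$ to force every subgradient at $x$ to match the slope of $f$ along the fiber and hence support $f$ at $z$. The only slip is cosmetic: the coefficient matrix $A_r$ is $d\times n$ (with $d$ rows), not ``$n\times d$-type,'' and this does not affect the rank--nullity bound $\dim K_r\geq n-d$.
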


\begin{proof}
A shared report supplies one affine function attaining $f$ at both
endpoints. Convexity forces equality on the segment. Conversely, suppose
$f$ is affine on the segment. The affine loss attained at its midpoint
has nonnegative gaps below $f$ at the two endpoints. Their average is zero,
so both gaps vanish. Uniqueness gives the same report at both endpoints.

For a fixed report, the expected identification function is affine in $x$.
Its zero set is an affine flat with at most $d$ defining equations.
This proves \eqref{eq:fullfiber}.

Let $x,z$ lie in one fiber. Its flat extends the segment slightly beyond
$x$ within $U$. Every subgradient at $x$ must equal the affine slope of
$f$ along that line, since both directions are feasible. It therefore also
supports $f$ at $z$. Reversing the two points proves
$\partial f(x)=\partial f(z)$.
\end{proof}

\begin{lemma}[A continuous family of direction spaces]\label{lem:continuousfibers}
Let $k$ be the minimum fiber dimension in \eqref{eq:fullfiber}.
There is a nonempty open subset $W\subseteq U$ on which every fiber has
dimension $k$. Its direction space $K(x)$ varies continuously on $W$.
\end{lemma}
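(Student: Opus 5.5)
Write $F(x)$ for the fiber through $x$ and $k(x)=\dim K(x)$, where $K(x)$ is its direction space. The plan is to show that the set $W=\{x\in U: k(x)=k\}$ is open and nonempty, and then that $K$ is continuous on $W$. Both facts should follow from one semicontinuity property of the fiber geometry. That property comes from Lemma~\ref{lem:affinefibers}: $\Gamma(x)=\Gamma(z)$ exactly when $f$ is affine on $[x,z]$, and every fiber is a relatively open piece $U\cap(x_r+K_r)$ of a flat.

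The key step is a closedness claim. Let $x_m\to x$ in $U$ and suppose unit vectors $v_m\in K(x_m)$ converge to $v$. Then $v\in K(x)$. The argument runs as follows.
\begin{itemize}
\item Pick $\varepsilon>0$ with $\overline B(x,2\varepsilon)\subseteq U$.
\item For large $m$, the segment $[x_m-\varepsilon v_m,\,x_m+\varepsilon v_m]$ lies in $U\cap(x_m+K(x_m))$, which is the fiber of $x_m$.
\item So $f$ is affine on each such segment, by the ``if'' direction of Lemma~\ref{lem:affinefibers}.
\item Since $f$ is continuous, the identity $f(\tfrac{a+b}{2})=\tfrac{f(a)+f(b)}{2}$ passes to the limit at all dyadic points. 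Hence $f$ is affine on $[x-\varepsilon v,\,x+\varepsilon v]$.
\item By the ``only if'' direction, $x\pm\varepsilon v$ lies in the fiber of $x$, so $v\in K(x)$.
\end{itemize}

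Openness of $W$ follows from the claim. Suppose $x_m\to x$ with $k(x_m)\ge k+1$ and $k(x)=k$. Choose orthonormal $(k+1)$-frames in $K(x_m)$ and pass to a convergent subsequence. The limit frame is still orthonormal, and by the claim it lies in $K(x)$. That contradicts $\dim K(x)=k$. So $k$ is upper semicontinuous, and since $k$ is the minimum value, $W=\{k\le k\}$ is open. It is nonempty by the definition of $k$ as a minimum.

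For continuity on $W$, view $K$ as a map into the Grassmannian $G(k,n)$, which is compact. Let $x_m\to x$ in $W$ and take any subsequential limit $L$ of $K(x_m)$. The claim gives $L\subseteq K(x)$, and both spaces have dimension $k$, so $L=K(x)$. Because every subsequential limit equals $K(x)$, compactness gives $K(x_m)\to K(x)$.

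The main obstacle is justifying that the segment of fixed length $\varepsilon$ around $x_m$ really lies in the fiber, uniformly in $m$. This needs the fiber to be the full intersection of $U$ with the flat, not just some subset of it. That is exactly what \eqref{eq:fullfiber} provides, since the identification equations define the whole flat. The rest is compactness.
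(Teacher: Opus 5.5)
Your proposal is correct and follows essentially the same route as the paper. The shared argument is: compactness of frames or the Grassmannian, passing the affine identities for $f$ to the limit on a fixed ball inside $U$, then Lemma~\ref{lem:affinefibers} to show the higher-dimension locus is closed, the minimal locus is open, and limiting planes equal $K(x)$ by dimension count. Working with unit vectors and orthonormal frames rather than $j$-planes and disks is only a cosmetic difference, and you correctly identify the full-fiber form \eqref{eq:fullfiber} as the input the limit step needs.
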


\begin{proof}
For every $j$, the set of points with fiber dimension at least $j$ is
relatively closed. Indeed, let $x_i\to x$ and choose $j$-planes
$E_i\subseteq K(x_i)$. A subsequence converges to a $j$-plane $E$ by
compactness of the Grassmannian. Inside a fixed small ball about $x$,
the affine identities for $f$ on $x_i+E_i$ pass to the limit by
continuity. Hence $f$ is affine on a disk in $x+E$.
Lemma~\ref{lem:affinefibers} puts that disk in the fiber of $x$.
Its dimension is therefore at least $j$.

The minimum dimension is attained and its locus is open. On this locus,
the same argument puts every limiting $k$-plane inside $K(x)$.
Equality of dimensions identifies the limit, proving continuity.
\end{proof}

The next lemma obtains differentiability in dual coordinates. It does not
assert that the report map or the direction spaces are differentiable.

\begin{lemma}[A dual coefficient graph]\label{lem:dualgraph}
Suppose the fibers have constant dimension $k<n$ near $x_*$. Put
$K_0=K(x_*)$ and $T=K_0^\perp$. Translate the origin to $x_*$.
On a small ball in $T$, write
\begin{equation}\label{eq:leafgraph}
 K(t)=\{(A(t)s,s):s\in K_0\}\subseteq T\oplus K_0,
 \qquad g(t)=f(t,0).
\end{equation}
The map $A$ is continuous and $g$ is strictly convex.
There are an open set $P\subseteq T$, a continuous map $t:P\to T$,
and a $C^1$ map $q:P\to K_0$ such that
\begin{equation}\label{eq:dualgraph}
 a(p)=(p,q(p))\in\partial f(t(p),0),\qquad
 Dq(p)=-A(t(p))^{\mathsf T}.
\end{equation}
Consequently the graph $S=a(P)$ has dimension $n-k$ and
\begin{equation}\label{eq:dualtangent}
 T_{a(p)}S=K(t(p))^\perp.
\end{equation}
One can choose $p_0\in P$ so that $g$ is differentiable at $t(p_0)$
and $\nabla g(t(p_0))=p_0$.
\end{lemma}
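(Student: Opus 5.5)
The plan is to treat the fibers near $x_*$ as a continuous foliation by $k$-dimensional affine disks, each transverse to $T$, and then pass to the dual side, where the subgradient is constant along each fiber.

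\textbf{Step 1: the transversal parametrization.} By Lemma~\ref{lem:continuousfibers}, $K(x)$ is close to $K_0$ near $x_*$, so each $K(x)$ is a graph over $K_0$. Shrinking the neighborhood, the fiber through a point $(t,0)$ of the slice $T$ is the flat $(t,0)+\{(A(t)s,s)\}$ intersected with $U$. Here $A(t)$ is the graph map of $K((t,0))$; it is continuous because $K$ is continuous and graph coordinates are continuous on the Grassmannian chart. Distinct slice points lie in distinct fibers. Otherwise the common flat would contain the segment between them, a nonzero direction in $T$. That is impossible, since $K$ is $C^0$-close to $K_0$ and $K_0\cap T=0$. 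Lemma~\ref{lem:affinefibers} then says $f$ is affine on no segment of the slice, so $g$ is strictly convex. By continuity the fibers through a small slice ball cover a neighborhood of $x_*$ (invariance of domain), so we get local coordinates $(t,s)\mapsto (t+A(t)s,s)$.

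\textbf{Step 2: the dual coefficients.} Take $a=(p,q)\in\partial f(t,0)$. By Lemma~\ref{lem:affinefibers}, $a$ is constant along the fiber, and $f$ is affine along it with slope $a$. Hence $a\cdot(A(t)s,s)$ equals the derivative of $f$ along the fiber direction. The key identity is that $\partial_s$ of $f$ along the fiber is computable from $g$ and supporting hyperplanes.

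The cleaner route is duality. Let $f^*$ be the conjugate. The set $\partial f(F_r)$ is a compact convex set $C_r$, common to the whole fiber. Strict convexity of $g$ makes $\partial g(t)$, the projection of $C_{r(t)}$ onto $T$, "monotone injective": distinct $t$ have disjoint $\partial g(t)$. By monotonicity, this gives a continuous map $t(p)=(\partial g)^{-1}(p)$ on the open set $P$ (the interior of the image of $\partial g$ near $t_*$, which is nonempty since $\partial g$ is a maximal monotone map on a ball). Then $t=\nabla g^*$ with $g^*$ being $C^1$. For $p\in\partial g(t)$, pick $a=(p,q)\in\partial f(t,0)$. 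The supporting condition along the fiber forces $p\cdot A(t)s+q\cdot s=\text{slope}$ for all $s$. I will show $q$ is uniquely determined, namely $q(p)=-A(t(p))^{\mathsf T}p+c(t(p))$, after normalizing via the value of $f$ on the fiber.

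To get the derivative formula, compare two nearby $p,p'$. Supporting inequalities at the points $(t,0)$ and at the point of fiber $t'$ lying over the same $s$ give a two-sided estimate. It has the form $q(p')-q(p)=-A(t(p))^{\mathsf T}(p'-p)+o(|p'-p|)$, using continuity of $A$ and of $t(\cdot)$. The identity $a\cdot$ direction $=$ const along each leaf converts the subgradient inequality for $f$ into a Lipschitz/differentiable statement for $q$. This is the standard argument showing that the envelope of affine functions with continuously varying slopes is $C^1$. I expect this step, establishing that $q$ is single-valued and $C^1$ with $Dq=-A^{\mathsf T}$ from merely continuous $A$, to be the main obstacle. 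I would first try pairing the inequalities $f(z)\ge f(x)+a\cdot(z-x)$ at the two fibers, evaluated at a common $K_0$-coordinate $s$, and dividing by $|p'-p|$.

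\textbf{Step 3: the remaining conclusions.} The tangent space is the graph of $Dq$, namely $\{(v,-A^{\mathsf T}v)\}$. This is exactly the orthogonal complement of $\{(As,s)\}=K(t(p))$, of dimension $n-k$, which proves \eqref{eq:dualtangent}. Finally, $g$ is convex, hence differentiable almost everywhere by Alexandrov/Rademacher. Its points of differentiability $t$ have $\partial g(t)=\{\nabla g(t)\}$, and the gradient image meets $P$. Choose such a $t_0$ in $t(P)$ and set $p_0=\nabla g(t_0)$.
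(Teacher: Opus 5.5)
Your outline follows the paper's structure: transversal slice, strict convexity of $g$, inverting $\partial g$, and $q=\eta(t)-A(t)^{\mathsf T}p$. But the step that carries the lemma, that $q$ is $C^1$ with $Dq(p)=-A(t(p))^{\mathsf T}$, is only announced. The plan you give for it is missing the ingredient that makes it work.

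If you pair (sum) the two supporting inequalities at $(t+A(t)s,s)$ and $(t'+A(t')s,s)$, which share a $K_0$-coordinate, the difference of the points lies in $T$. So $\Delta q$ cancels, and you are left with $\Delta p\cdot(t'-t)+\Delta p\cdot(A(t')-A(t))s\geq0$, which says nothing about $q$. A single inequality at a fixed $s$ is linear in $s$. Dividing it by $\|p'-p\|$ therefore controls only one component of $\Delta q+A^{\mathsf T}\Delta p$, and only from one side.

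The missing idea is that $(p,q(p))$ supports $f$ at every point $(t+A(t)s,s)$ with $\|s\|\leq\delta$. Here $\delta$ is uniform over the transversal ball, and both signs of $s$ are allowed. Monotonicity against the subgradient $(p',q(p'))$ at $(t',0)$ gives $0\leq\Delta p\cdot(t'-t)-(\Delta q+A(t)^{\mathsf T}\Delta p)\cdot s$ for all such $s$. Taking the supremum over $s$ yields $\delta\|\Delta q+A(t)^{\mathsf T}\Delta p\|\leq\|\Delta p\|\,\|t'-t\|$. Continuity of $t(\cdot)$ makes the right side $o(\|\Delta p\|)$, which is the Fr\'echet estimate, and continuity of $A$ then gives continuity of $Dq$.

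A single supporting inequality at a common $s$ also works, provided you do two things. Bound the gap $g(t')-g(t)-p\cdot(t'-t)$ by $\Delta p\cdot(t'-t)$, then optimize over $s$. This gives the same bound with $A(t')$ in place of $A(t)$. Your remark about envelopes of affine functions with continuously varying slopes does not substitute for this estimate.

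There are three smaller gaps.
\begin{itemize}
\item You use without justification that each $p\in\partial g(t)$ extends to some $(p,q)\in\partial f(t,0)$. This needs the subgradient restriction rule, or a Hahn--Banach extension below the directional derivative $f'((t,0);\cdot)$.
\item A nonempty interior for the image of $\partial g$ does not follow from maximal monotonicity: an affine $g$ has a one-point image. It follows from strict convexity. Minimize $g(t)-p\cdot t$ over a closed ball around a point $t_0$ with $p_0\in\partial g(t_0)$, and show the minimizer stays interior for $p$ near $p_0$.
\item Finding a differentiability point of $g$ inside $t(P)$ needs an argument, such as invariance of domain for the continuous injective map $t$. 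It is simpler to choose the differentiability point $t_0$ first and build $P$ around $p_0=\nabla g(t_0)$, as the paper does.
\end{itemize}
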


\begin{proof}
Continuity of $K$ makes all nearby direction spaces transverse to $T$,
giving the continuous graph map $A$. A fiber through a transversal point
contains $(t+A(t)s,s)$ for all sufficiently small $s$.
Choose a smaller transversal ball whose closure lies in the patch.
Continuity bounds $\|A(t)\|$ on this compact closure, which has positive
distance from the complement of $U$. A uniform
$\delta>0$ therefore makes these points belong to $U$ for every
$\|s\|\leq\delta$ and every $t$ in that ball.

If $g$ had an affine segment between distinct transversal points, those
points would share a fiber by Lemma~\ref{lem:affinefibers}.
Transversality excludes this. Thus $g$ is strictly convex.
Choose an interior differentiability point $t_0$, which exists because a
finite convex function is differentiable almost everywhere. Set
$p_0=\nabla g(t_0)$ and choose a closed transversal ball $C$ centered
at $t_0$ inside the previous ball. The function $g(t)-p_0\cdot t$
has a unique minimum at $t_0$ and a positive gap on $\partial C$.
For $p$ in a neighborhood $P$ of $p_0$, it follows that
\begin{equation}\label{eq:dualminimizer}
 t(p)=\arg\min_{t\in C}\{g(t)-p\cdot t\}
\end{equation}
is unique and interior. Compactness and uniqueness make $t(p)$ continuous,
and $p\in\partial g(t(p))$.

Every subgradient $p$ of the restriction $g$ extends to a subgradient
$(p,q)$ of $f$ by the subgradient restriction rule
\citep[Theorem~23.9]{Rockafellar1970}. Its domain condition holds because
the transversal meets the open set $U$. For a direct proof, the directional
derivative $f'((t,0);v)$ is
finite and sublinear in $v$. The functional $p$ on $T$ lies below its
restriction. Hahn--Banach extends $p$ to a linear functional below
$f'((t,0);\cdot)$ on the full space. For every $z\in U$,
convexity gives $f'((t,0);z-(t,0))\leq f(z)-f(t,0)$.
Thus the extended functional is a subgradient of $f$ at $(t,0)$.
The extension is unique after fixing $p$, since all subgradients agree on
$K(t)$ because $f$ is affine in both directions along the fiber.
If $\eta(t)$ is its affine slope in the variable $s$, then
$A(t)^{\mathsf T}p+q=\eta(t)$, which determines $q$.

Define $q(p)$ by this extension at $t(p)$. Put $t=t(p)$,
$t'=t(p')$, $\Delta p=p'-p$, and $\Delta q=q(p')-q(p)$.
Lemma~\ref{lem:affinefibers} makes $(p,q(p))$ a subgradient at
$(t+A(t)s,s)$ for all $\|s\|\leq\delta$.
The same $\delta$ applies to every transversal point, so this assertion
holds uniformly for both signs of each fiber displacement.
Monotonicity of subgradients, compared with $(t',0)$, gives
\[
 0\leq\Delta p\cdot(t'-t)
       -\bigl(\Delta q+A(t)^{\mathsf T}\Delta p\bigr)\cdot s.
\]
Taking the supremum over that ball in $s$ yields
\begin{equation}\label{eq:dualestimate}
 \delta\|\Delta q+A(t)^{\mathsf T}\Delta p\|
 \leq\Delta p\cdot(t'-t)
 \leq\|\Delta p\|\,\|t'-t\|.
\end{equation}
For $p'\ne p$, divide \eqref{eq:dualestimate} by $\delta\|p'-p\|$.
The resulting upper bound $\delta^{-1}\|t(p')-t(p)\|$ tends to zero
by continuity of $t$. This is the Fr\'echet remainder estimate for
$Dq(p)=-A(t(p))^{\mathsf T}$. Continuity of $A$ makes this derivative continuous.
The tangent vectors $(v,-A(t(p))^{\mathsf T}v)$ are exactly the vectors
orthogonal to $K(t(p))$, proving \eqref{eq:dualtangent}.
\end{proof}

\section{Proof of the lower bound}\label{sec:lowerbound}
In moment coordinates, the target is the polynomial
\begin{equation}\label{eq:polynomial}
 F_n(x)=(1-n)(-x_1)^n+
 \sum_{j=2}^n \binom{n}{j}(-x_1)^{n-j}x_j.
\end{equation}
Its derivative with respect to $x_n$ equals one.

\begin{lemma}[Mean recovery]\label{lem:meanrecovery}
Every elicitable report from which $F_n$ can be computed is constant-mean
on each of its fibers, for $n\geq2$.
\end{lemma}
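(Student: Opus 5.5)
We argue by contradiction, using that elicitable fibers are convex and in fact affine pieces of the negative Bayes risk. By Lemma~\ref{lem:affinefibers} (or, without identification, by the general convexity of level sets of elicitable reports), each fiber of $\Gamma$ is a convex subset of $U$. Because $F_n=\psi\circ\Gamma$, the polynomial $F_n$ is constant on each fiber. Suppose some fiber contains two points $x\ne z$ with $x_1\ne z_1$. Then the whole segment $[x,z]$ lies in the fiber, so the one-variable polynomial $h(\lambda)=F_n(x+\lambda(z-x))$ is constant for $\lambda\in[0,1]$. A polynomial constant on an interval is constant on all of $\R$. Hence $h$ has degree zero, and we will show this is impossible when the direction $v=z-x$ has $v_1\ne0$.

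The key step is to expand $h$ and read off its top coefficient. Only the terms $(-x_1)^{n-j}x_j$ with $j\le n$ appear in \eqref{eq:polynomial}, each of degree at most $n-j+1$ in $\lambda$. The first term $(1-n)(-x_1)^n$ contributes $(1-n)(-v_1)^n\lambda^n$. Among the sum terms, only $j=1$ could reach degree $n$, and it is absent; the term for $j\ge2$ has degree at most $n-j+1\le n-1$. So the coefficient of $\lambda^n$ in $h$ is $(1-n)(-v_1)^n$, which is nonzero because $n\ge2$ and $v_1\ne0$. Thus $h$ has exact degree $n\ge2$ and cannot be constant on an interval, a contradiction. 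Hence $x_1=z_1$ on every fiber; that is, the report is constant-mean on each fiber. This is the argument of \citet[Proposition~5]{FK2015pre}.

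The only real subtlety is justifying that fibers are convex, so that segments between fiber points stay in the fiber, without any regularity of $L$. Lemma~\ref{lem:affinefibers} already supplies this under identification. For a general elicitable report, take $x,z$ with a common report $r$. Since $\ell_r$ is affine and attains $f$ at both endpoints while $f\ge\ell_r$ is convex, $f=\ell_r$ on the segment. Uniqueness of the minimizer at each intermediate point then forces report $r$ there. Everything else is the degree computation above.
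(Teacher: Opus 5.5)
Your proof is correct and is essentially the paper's argument. Fibers are convex by linearity of expected loss plus uniqueness of the minimizer, which your Bayes-risk formulation restates. The $\lambda^n$ coefficient $(1-n)(-v_1)^n$ of $F_n$ along the segment must then vanish, forcing $v_1=0$.
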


\begin{proof}
Let $x$ and $x'$ have the same report. Linearity of expected loss and
uniqueness of its minimum imply that every mixture
$x+t(x'-x)$, $0\leq t\leq1$, has that report. Hence $F_n$ is constant
on the segment. Put $h=x'-x$. The coefficient of $t^n$ in
$F_n(x+th)$ is
\begin{equation}\label{eq:leading}
 (1-n)(-h_1)^n.
\end{equation}
Every term in the sum in \eqref{eq:polynomial} has degree at most $n-1$
in $t$. Since $n\geq2$, constancy forces $h_1=0$.
\end{proof}

This is the polynomial observation in Proposition 5 of
\citet{FK2015pre}. We include it to identify exactly where the new
dimension argument starts.

\begin{proofof}{Theorem~\ref{thm:identifiablecentral}}
Suppose $d<n$. Lemma~\ref{lem:affinefibers} gives minimum fiber
dimension $k\geq n-d\geq1$. The mean-recovery argument in
Lemma~\ref{lem:meanrecovery} uses only unique elicitation.
Thus every fiber has constant mean, its direction lies in $e_1^\perp$,
and $k<n$. Take the patch and dual graph above. Then $e_1\in T$ and
$A(t)^{\mathsf T}e_1=0$. The orthogonal coordinates can preserve $e_1$.
Equation~\eqref{eq:dualgraph} gives $Dq(p)e_1=0$.
Shrink $P$ to a rectangular neighborhood of $p_0$ with first axis $e_1$.
Then $q$ is independent of the first coordinate. Its graph and tangent
space are unchanged along
\begin{equation}\label{eq:dualtranslation}
 a(p_0+\tau e_1)=a(p_0)+\tau e_1.
\end{equation}

The mean of $t(p_0+\tau e_1)$ is continuous and is not constant.
Indeed, suppose it equals the mean at $t_0=t(p_0)$ for some
$\tau\ne0$. The two subgradient inequalities for $g$ have total gap
\[
 \tau e_1\cdot\bigl(t(p_0+\tau e_1)-t_0\bigr)=0.
\]
Both must be equalities. Strict convexity forces $t(p_0+\tau e_1)=t_0$.
Differentiability of $g$ at $t_0$ then forces $p_0+\tau e_1=p_0$,
a contradiction. Thus these means contain a nontrivial interval.
This argument allows multiple subgradients at other transversal points.

The central-moment polynomial \eqref{eq:polynomial} is constant on each
fiber, so its gradient annihilates $K(t(p))$. After removing its first
coordinate, \eqref{eq:dualtangent} gives
\[
 e_1,\quad
 b_n(m)=\left(0,
       \left(\binom{n}{j}(-m)^{n-j}\right)_{j=2}^n\right)
 \ \in T_{a(p)}S,
\]
where $m$ is the fiber mean in the original moment coordinates.
Along \eqref{eq:dualtranslation}, the tangent space is fixed and $m$
ranges over an interval. The last $n-1$ coordinates of $b_n(m)$
span $\R^{n-1}$ by the Vandermonde identity. Together with $e_1$,
they force $\dim S\geq n$. But $\dim S=n-k<n$, a contradiction.

The raw-moment loss in \eqref{eq:upper} supplies the matching upper bound.
\label{end:mainproof}
\end{proofof}

The smoothness of the coefficient graph is a conclusion of
\eqref{eq:dualestimate}. The proof does not replace the original reports
with distinct subgradients and claim unique elicitation of those subgradients.
Several subgradients may belong to the same original report fiber.

\section{Consequences and extensions}\label{sec:consequences}
\subsection{Standardized moments}
\begin{lemma}[Reduction to a central moment]\label{lem:standardreduction}
Suppose a uniquely elicitable report determines $s_n$, with $n\geq3$.
On some nonempty open convex ball in $U$, the same report determines $\mu_n$.
\end{lemma}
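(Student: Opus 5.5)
Mirroring Lemma~\ref{lem:meanrecovery}, I would use the segment structure of fibers and work polynomially along segments. Let the report fibers be as in Lemma~\ref{lem:affinefibers}, or more simply as follows: if $x,x'$ share a report, then every mixture $x+t(x'-x)$, $t\in[0,1]$, shares it. Hence $s_n$ is constant on the segment, and so is $\mu_n=c\,\mu_2^{n/2}$ with $c$ the common value of $s_n$. The goal is to show that on some ball both $\mu_2$ and $\mu_n$ are constant along every fiber segment. Then the report determines $\mu_n$ there: any two points with the same report in a convex ball are joined by a fiber segment inside the ball.

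The key computation concerns a segment $x+th$. By \eqref{eq:polynomial}, $\mu_2(x+th)$ is a polynomial of degree at most $2$ in $t$, with $t^2$ coefficient $-h_1^2$. Likewise $\mu_n(x+th)$ is a polynomial of degree at most $n$ with $t^n$ coefficient $(1-n)(-h_1)^n$. Suppose $h_1\neq0$. Then $\mu_2$ is a genuinely quadratic positive polynomial on the segment. The identity $\mu_n=c\,\mu_2^{n/2}$ on an interval then extends to polynomial identities. I would split into two cases.
\begin{itemize}
\item If $n$ is even, compare degree-$n$ coefficients: $(1-n)h_1^n=c(-h_1^2)^{n/2}$.
\item If $n$ is odd, squaring gives $\mu_n^2=c^2\mu_2^n$. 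Since $\mu_2$ is a quadratic not a perfect square (it is positive with negative leading coefficient, so it has two distinct real roots), $\mu_2^n$ with $n$ odd is not a square of a polynomial unless $c=0$. So $c=0$, forcing $\mu_n\equiv0$. Its leading coefficient is then nonzero, a contradiction.
\end{itemize}
In the even case one solves $c=(1-n)(-1)^{n/2}$. I would avoid further analysis of this case by restricting to a ball where $s_n$ differs from this exceptional constant. The ball must be chosen so that no segment inside it can have this value, i.e. $s_n\neq(1-n)(-1)^{n/2}$ throughout the ball. Such a ball exists because $s_n$ is a nonconstant real-analytic function on $U$. Its level set at one value is nowhere dense, so some ball avoids it. Within that ball every fiber segment has $h_1=0$.

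Once $h_1=0$, we have $\mu_2(x+th)=\mu_2(x)+th_2$, which is affine. Then $\mu_n$ is a polynomial of degree at most $n-1$ equal to $c(\mu_2(x)+th_2)^{n/2}$. For $n$ odd and $h_2\neq0$ the right side is not polynomial unless $c=0$, so either $h_2=0$ or $\mu_n\equiv0$. In the latter case $\mu_n$ is constant anyway. For $n$ even and $h_2\neq0$, the right side has degree $n/2$ with leading coefficient $c\,h_2^{n/2}$. The left side is $\sum_j\binom nj(-x_1)^{n-j}(x_j+th_j)$, which is affine in $t$. So for $n\geq 4$ we get $c=0$ and again $\mu_n$ is constant.

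The main obstacle is handling $h_2=0$ versus $c\neq0$ uniformly. In every case, though, the conclusion is that $\mu_n$ is constant on the segment: either $\mu_2$ is constant and $\mu_n=c\mu_2^{n/2}$ is constant, or $c=0$. The delicate part is the even-degree exceptional constant in the $h_1\neq0$ case, which the choice of ball handles.
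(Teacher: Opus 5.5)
Your proof is correct and follows essentially the paper's route. You use mixture segments inside a convex ball, compare the top $t$-coefficients of $\mu_n(x+th)$ and $\mu_2(x+th)$ split by parity, remove the exceptional even-$n$ value $(1-n)(-1)^{n/2}$ by shrinking to a ball, and then treat the affine case $h_1=0$. The differences are minor: for odd $n$ you exclude $h_1\neq0$ via root multiplicities where the paper just compares the $t^{2n}$ coefficients $(n-1)^2h_1^{2n}$ and $-s^2h_1^{2n}$, and you handle $c=0$ directly (so $\mu_n\equiv0$ on the fiber) instead of also restricting to a ball where $s_n\neq0$.
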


\begin{proof}
The derivative
\begin{equation}\label{eq:standardderivative}
 \frac{\partial s_n}{\partial x_n}=\mu_2^{-n/2}>0
\end{equation}
shows that $s_n$ is nonconstant on every open moment set.
If $n$ is odd, restrict to a nonempty open convex ball on which $s_n\ne0$.
If $n$ is even, choose such a ball on which
\begin{equation}\label{eq:exceptional}
 s_n\ne(-1)^{n/2+1}(n-1).
\end{equation}
These restrictions are possible by \eqref{eq:standardderivative}.

Take two distributions in this ball with the same report. Every mixture
has that report and the same standardized moment $s$.
Along the mixture line let $A(t)=\mu_n(x+th)$ and
$B(t)=\mu_2(x+th)>0$.
If $n$ is odd, square the defining identity to obtain
\begin{equation}\label{eq:oddidentity}
 A(t)^2=s^2B(t)^n.
\end{equation}
The coefficient of $t^{2n}$ on the left is
$(n-1)^2h_1^{2n}$. On the right it is $-s^2h_1^{2n}$.
Therefore $h_1=0$.
If $n$ is even, use $A(t)=sB(t)^{n/2}$ without squaring.
Its leading coefficients give
\begin{equation}\label{eq:evenleading}
 -(n-1)h_1^n=s(-1)^{n/2}h_1^n.
\end{equation}
The excluded value in \eqref{eq:exceptional} again forces $h_1=0$.

With the mean fixed, both $A(t)$ and $B(t)$ are affine in $t$.
For odd $n$, \eqref{eq:oddidentity} compares a polynomial of degree at
most two with $s^2B(t)^n$. Since $s\ne0$ and $n>2$, $B(t)$ must be
constant. For even $n\geq4$, $s>0$ and $n/2>1$. The identity
$A(t)=sB(t)^{n/2}$ again makes $B(t)$ constant.
In both cases, $A(t)$ is then constant as well.

Thus $\mu_n$ is constant on every report fiber in the restricted ball.
It defines a link from that same report to $\mu_n$.

\end{proof}

\begin{proofof}{Theorem~\ref{thm:identifiablestandardized}}
The raw-moment report in \eqref{eq:upper} gives the upper bound.
For the lower bound, apply Lemma~\ref{lem:standardreduction}.
Restriction to its ball preserves identification and unique elicitation.
Theorem~\ref{thm:identifiablecentral} therefore gives $d\geq n$.
\end{proofof}

\subsection{Convex distribution classes}
The identifiable lower bound also applies to distribution classes that
contain no finitely supported distributions. The relevant requirement is
affine independence of raw-moment vectors.

\begin{corollary}\label{cor:richclass}
Let $\mathcal P$ be a convex class of real-valued distributions with finite
absolute $n$th moments. Suppose the affine hull of
\[
 \{(\E_PY,\ldots,\E_PY^n):P\in\mathcal P\}
\]
is $\R^n$. Then the identifiable elicitation complexity of $\mu_n$ on
$\mathcal P$ is $n$. The same holds for $s_n$ when $n\geq3$ and every
distribution in $\mathcal P$ has positive variance. Loss and identification
coordinates are required to be integrable on $\mathcal P$.
\end{corollary}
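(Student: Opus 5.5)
The upper bound is immediate: the raw-moment report with the squared loss \eqref{eq:upper} stays valid on $\mathcal P$. Integrability of $|Y|^{2n}$ is not automatic, but we can replace the squared loss by a Bregman-type loss, for example $L(r,y)=\phi(r)+\nabla\phi(r)\cdot(\tilde y-r)$ with $\tilde y=(y,\ldots,y^n)$ and strictly convex $\phi$. Its expectation only needs the first $n$ moments, and it is uniquely minimized at the mean vector. The identification $V_j(r,y)=y^j-r_j$ is integrable by assumption. For $s_n$, positive variance keeps the link finite. The substance is the lower bound. Here I will reduce to finite-dimensional geometry in the moment image and then rerun the proof of Theorem~\ref{thm:identifiablecentral} and Lemma~\ref{lem:standardreduction}.

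Let $M:\mathcal P\to\R^n$ be the raw-moment map. It is affine under mixtures, so $M(\mathcal P)$ is convex. Its affine hull is $\R^n$, so it has nonempty interior $U'$. The key difficulty is that $\Gamma$, $L$ and $V$ live on distributions, not on moment vectors. Two distributions with the same moments may have different reports, so $\Gamma$ need not factor through $M$. I would handle this by working with the negative Bayes risk $f(P)=\sup_r -\E_P L(r,Y)$. It is convex along mixtures and affine along mixture segments exactly when the endpoints share a report; the argument of Lemma~\ref{lem:affinefibers} holds verbatim on any convex set. Mean recovery (Lemma~\ref{lem:meanrecovery}) also holds verbatim: $\mu_n$ of a mixture $P+t(P'-P)$ is the polynomial $F_n$ evaluated at $M(P)+tM(P'-P)$, so fibers have constant mean. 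The same applies to the standardized reduction of Lemma~\ref{lem:standardreduction}.

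To get the dimension argument, I would choose a finite-dimensional slice. Pick $n+1$ distributions $Q_0,\ldots,Q_n\in\mathcal P$ whose moment vectors are affinely independent and whose convex hull has interior meeting $U'$. The simplex $\Delta=\operatorname{conv}(Q_i)$ is a convex subset of $\mathcal P$, and $M$ restricted to $\Delta$ is an affine bijection onto an open simplex $U''\subseteq\R^n$. Restricting $\Gamma$, $L$, $V$ to $\Delta$ preserves unique elicitation and identification. Expectations over $P\in\Delta$ are affine in the barycentric coordinates, hence affine in $x=M(P)$. This is all Section~\ref{sec:geometry} used about finite support: $\ell_r$ is affine, and $\E_{P_x}V(r,Y)$ is affine with at most $d$ equations. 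Integrability guarantees finiteness.

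Lemmas~\ref{lem:affinefibers}, \ref{lem:continuousfibers} and \ref{lem:dualgraph}, and then the proof of Theorem~\ref{thm:identifiablecentral}, therefore apply on any open convex ball in $U''$, since the target there is still $F_n(x)$. This gives $d\ge n$. For $s_n$, Lemma~\ref{lem:standardreduction} likewise applies inside $U''$, where variance is positive.

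The main obstacle is checking that no step of Section~\ref{sec:geometry} secretly used the specific simplex coordinates, such as the positivity of $P_x$ or the exact form of $U$. I expect only affineness in $x$ and openness of the domain are needed, and the slice $\Delta$ provides both.
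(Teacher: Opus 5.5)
Your argument is the same as the paper's. You restrict to positive mixtures of $n+1$ members of $\mathcal P$ with affinely independent raw-moment vectors, and observe that expected loss and expected identification are affine (and, by integrability, finite) in the resulting open simplex of moment coordinates. You then rerun the proofs of Theorems~\ref{thm:identifiablecentral} and \ref{thm:identifiablestandardized} there without change.

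There is one sign slip in your upper bound. Your loss has expectation $\phi(r)+\nabla\phi(r)\cdot(x-r)\leq\phi(x)$, with equality only at $r=x$, so the moment vector is its unique \emph{maximizer}. You must negate it. The paper instead uses the separable Bregman loss \eqref{eq:bregman} with $\phi(z)=\sqrt{1+z^2}$, whose linear growth keeps the $\phi(y^j)$ term integrable.
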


\begin{proof}
Choose $n+1$ distributions in $\mathcal P$ with affinely independent
raw-moment vectors. Their positive mixtures have an open simplex of
raw-moment coordinates. For every report, expected loss and expected
identification are affine in those coordinates. The proofs of
Theorems~\ref{thm:identifiablecentral} and
\ref{thm:identifiablestandardized} therefore apply without change.
Their only use of the finite outcome support was to supply these affine
coordinates and finite expectations. The integrability assumption supplies
the latter here. The loss in \eqref{eq:bregman} gives the matching upper bound.
Its raw-moment report is identifiable.
\end{proof}

The upper bound extends to all distributions with a finite absolute $n$th
moment. To avoid the stronger moment requirement of squared loss, use
$\phi(z)=\sqrt{1+z^2}$ and the nonnegative loss
\begin{equation}\label{eq:bregman}
 L(r,y)=\sum_{j=1}^n\left\{\phi(y^j)-\phi(r_j)
             -\phi'(r_j)(y^j-r_j)\right\}.
\end{equation}
Strict convexity of $\phi$ makes the unique minimizer the raw-moment vector.
The risk is finite because $\phi$ grows at most linearly and $\phi'$ is
bounded. Its excess risk in coordinate $j$ is
$\phi(x_j)-\phi(r_j)-\phi'(r_j)(x_j-r_j)$.
This gives the matching upper bound under the natural moment condition.

\section{Related work and discussion}\label{sec:discussion}
\citet{LPS2008} require the intermediate coordinates to be separately
elicitable and continuous. They give complexity three for skewness and a
three-to-four bound for kurtosis, with four conjectured. Our theorem permits
jointly elicitable identifiable intermediaries with no regularity of their
labels. The two intermediary classes are not identical. We therefore do
not claim to settle the entire continuous-coordinate formulation of their
kurtosis problem.

The central-moment conjecture in \citet{FK2015pre} uses the identifiable
intermediary framework formalized in \citet{FK2021}. Its mean-recovery
argument is Lemma~\ref{lem:meanrecovery} here. The affine-segment criterion
in Lemma~\ref{lem:affinefibers} is also known from the strict concavity of
Bayes risk relative to an elicited property \citep[Lemma~8]{FK2021}.
Our proof adds the continuous family of minimal fiber directions and the
dual coefficient graph. These steps control all candidate intermediaries
through one convex function.

Conditional elicitation studies whether a property becomes elicitable
when another property is known. \citet{FK2015pre} use this approach to
exclude selected incomplete moment vectors under differentiability
assumptions. \citet{FK2015vector} develop vector-valued characterizations
that also constrain specific properties. Our argument requires no Hessian
and constructs the needed differentiability in supporting-coefficient
coordinates. It does not assume differentiability of the original report.

The number of observations in the loss changes the problem.
\citet{CFMW2017} study losses that evaluate a report against several
independent observations. They give central-moment tradeoffs between report
dimension and observation count. The bound here fixes the observation count
at one. It places no restriction on the dimensions possible with
multi-observation losses.

\citet{FKB2016} also ask for lower bounds under regularity assumptions.
Appendix~\ref{app:smooth} proves the same exact bounds when the loss and
report map are $C^1$, without identification. Appendix~\ref{app:definable}
replaces those smoothness assumptions by definability in one o-minimal
expansion of the real field. That result includes nonsmooth semialgebraic
losses and discontinuous optimal-report maps. These statements are separate
from the unrestricted identifiable theorem.

The dimension restriction in the main theorem comes from the number of
identification equations. It does not come from the numerical labels of
the reports. Removing identification and both alternative regularity
conditions admits arbitrary encodings of a distribution into one real
report \citep{FK2021}. Losses with nonunique minimizers also fall outside
our result. Extending the argument to such losses would require control of
contact sets that need not coincide with single report fibers.

\section*{AI Disclosure}
OpenAI Codex assisted with problem formulation, mathematical argument
development, literature searches, drafting, and proof checking.
ChatGPT assisted with proof review.
The author reviewed the complete manuscript, including the mathematical
arguments and proofs.

\bibliography{references}

@misc{FK2015pre,
  author = {Rafael Frongillo and Ian A. Kash},
  title = {On Elicitation Complexity and Conditional Elicitation},
  year = {2015},
  eprint = {1506.07212},
  archivePrefix = {arXiv},
  primaryClass = {cs.LG},
  url = {https://arxiv.org/abs/1506.07212v1},
  note = {Version 1. The central-moment conjecture appears on page 8 after Corollary 2}
}

@article{FK2021,
  author = {Frongillo, Rafael M and Kash, Ian A},
  title = {Elicitation complexity of statistical properties},
  journal = {Biometrika},
  volume = {108},
  number = {4},
  pages = {857--879},
  year = {2021},
  doi = {10.1093/biomet/asaa093}
}

@inproceedings{FKB2016,
  author = {Rafael Frongillo and Ian A. Kash and Stephen Becker},
  title = {Open Problem: Property Elicitation and Elicitation Complexity},
  booktitle = {29th Annual Conference on Learning Theory},
  series = {Proceedings of Machine Learning Research},
  volume = {49},
  pages = {1655--1658},
  publisher = {PMLR},
  year = {2016},
  url = {https://proceedings.mlr.press/v49/frongillo16.html}
}

@article{FW2024,
  author = {Frongillo, Rafael and Waggoner, Bo},
  title = {Recent Trends in Information Elicitation},
  journal = {ACM SIGecom Exchanges},
  volume = {22},
  number = {1},
  pages = {122--134},
  year = {2024},
  doi = {10.1145/3699824.3699836}
}

@inproceedings{LPS2008,
  author = {Lambert, Nicolas S. and Pennock, David M. and Shoham, Yoav},
  title = {Eliciting properties of probability distributions},
  booktitle = {Proceedings of the 9th ACM conference on Electronic commerce},
  pages = {129--138},
  year = {2008},
  doi = {10.1145/1386790.1386813}
}

@article{vdD1998,
  author = {van den Dries, Lou},
  title = {O-minimal Structures and Real Analytic Geometry},
  journal = {Current Developments in Mathematics},
  volume = {1998},
  number = {1},
  pages = {105--152},
  year = {1998},
  doi = {10.4310/cdm.1998.v1998.n1.a4}
}

@inproceedings{FK2015vector,
  author = {Rafael Frongillo and Ian A. Kash},
  title = {Vector-Valued Property Elicitation},
  booktitle = {Proceedings of The 28th Conference on Learning Theory},
  series = {Proceedings of Machine Learning Research},
  volume = {40},
  pages = {710--727},
  publisher = {PMLR},
  year = {2015},
  url = {https://proceedings.mlr.press/v40/Frongillo15.html}
}

@inproceedings{CFMW2017,
  author = {Casalaina-Martin, Sebastian and Frongillo, Rafael and Morgan, Tom and Waggoner, Bo},
  title = {Multi-Observation Elicitation},
  booktitle = {Proceedings of the 2017 Conference on Learning Theory},
  series = {Proceedings of Machine Learning Research},
  volume = {65},
  pages = {449--464},
  publisher = {PMLR},
  year = {2017},
  url = {https://proceedings.mlr.press/v65/casalaina-martin17a.html}
}

@book{Rockafellar1970,
  author = {R. Tyrrell Rockafellar},
  title = {Convex Analysis},
  publisher = {Princeton University Press},
  year = {1970},
  url = {https://www.jstor.org/stable/j.ctt14bs1ff}
}
\appendix
\section{Smooth losses and report maps}\label{app:smooth}
These results do not assume that the intermediary is identifiable.
\begin{definition}\label{def:smooth}
A smooth indirect elicitation of a target $T:U\to\R$ uses an open
report set $\mathcal R\subseteq\R^d$. It consists of a loss $L:\mathcal R\times
\{y_0,\ldots,y_n\}\to\R$, a report map $\Gamma:U\to\mathcal R$, and a
link $\psi:\Gamma(U)\to\R$. Each $L(\cdot,y_i)$ and $\Gamma$ is $C^1$,
and, for every $x\in U$,
\begin{equation}\label{eq:elicitation}
 \{\Gamma(x)\}=\arg\min_{r\in\mathcal R}\E_{P_x}L(r,Y),
 \qquad T(x)=\psi(\Gamma(x)).
\end{equation}
The smooth elicitation complexity is the smallest such $d$.
\end{definition}

The definition requires an attained, unique minimum. It does not assume
that the loss is convex, that its Hessian exists, or that the report is
identifiable. The loss and the report map have separate $C^1$ assumptions.
Smoothness of one does not substitute for smoothness of the other.

\begin{theorem}\label{thm:central}
For every $n\geq2$, the smooth elicitation complexity of $\mu_n$ on $U$
is exactly $n$.
\end{theorem}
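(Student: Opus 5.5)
The upper bound is again the raw-moment report with the squared loss \eqref{eq:upper}. Both $L(\cdot,y_i)$ and $r\mapsto r$ are $C^1$, and the minimizer $x$ is unique and attained. So it suffices to show that $d<n$ is impossible. The plan is to reduce the smooth case to the geometry of Section~\ref{sec:geometry}, with the $C^1$ hypotheses replacing identification. As before, let $f(x)=\sup_r\ell_r(x)$ be the negative Bayes risk. Lemma~\ref{lem:meanrecovery} uses only unique elicitation, so every report fiber has constant mean. The affine-segment half of Lemma~\ref{lem:affinefibers} also holds, since its proof uses only uniqueness: $f$ is affine on $[x,z]$ exactly when $\Gamma(x)=\Gamma(z)$. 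The part that must be rebuilt is the claim that fibers are open pieces of affine flats of dimension at least $n-d$.

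\textbf{Step 1: fibers from smoothness.} Since $\Gamma:U\to\R^d$ is $C^1$ with $d<n$, its rank is at most $d$. Let $\rho$ be the maximal rank of $D\Gamma$ on $U$. The set where the rank equals $\rho$ is open and nonempty. By the constant rank theorem, on a small ball $W$ the level sets of $\Gamma$ are $C^1$ submanifolds of dimension $k=n-\rho\geq n-d\geq1$.

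Each level set is convex: two points with the same report give that report on the whole segment, by linearity of expected loss and uniqueness. A convex set that is a $k$-dimensional submanifold is an open subset of a $k$-dimensional affine flat. Hence locally $F_r\cap W=W\cap(x_r+K(x))$, where $K(x)=\ker D\Gamma(x)$ varies continuously.

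The argument for constancy of $\partial f$ along fibers in Lemma~\ref{lem:affinefibers} used only the fact that the fiber is relatively open in a flat through the point. The local form just obtained supplies this. So on $W$ we recover exactly the hypotheses of Lemma~\ref{lem:dualgraph}: constant fiber dimension $k<n$, continuous direction spaces, and $\partial f$ constant on fibers. The bound $k<n$ holds because fibers have constant mean, so $K\subseteq e_1^\perp$.

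\textbf{Step 2: rerun the lower bound.} With the dual coefficient graph available on $W$, the proof of Theorem~\ref{thm:identifiablecentral} after its first paragraph goes through verbatim. $K(t)\subseteq e_1^\perp$ gives $Dq\,e_1=0$, which yields the translations \eqref{eq:dualtranslation}. Strict convexity of $g$ and differentiability at $t_0$ make the mean vary over an interval. The gradients $e_1$ and $b_n(m)$ lie in $T_{a(p)}S=K(t(p))^\perp$, because $F_n$ is constant on fibers. The Vandermonde span then forces $\dim S\geq n$, contradicting $\dim S=n-k<n$.

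Note that Step 1 does not actually need $L$ to be $C^1$; it uses only the $C^1$ report map and unique minimizers. Smoothness of $L$ could serve as a backup, through the first-order condition $\sum_i p_i\nabla_rL(\Gamma(x),y_i)=0$. This gives an affine identification-like equation in $x$ for each fixed $r$, with $d$ coordinates. It shows directly that fibers are flats of dimension at least $n-d$ intersected with $U$, up to the possibility of nonminimizing critical points.

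\textbf{Main obstacle.} I expect the delicate point to be Step 1's claim that the local level set of $\Gamma$ is exactly the local fiber of the report. The constant rank theorem describes the connected local level set in $W$. The true fiber is $\Gamma^{-1}(r)\cap U$, which by convexity is a convex set meeting $W$. Convexity shows that this intersection is connected and lies in the flat through the local piece. So the full fiber is $U\cap(x_r+K)$ and has dimension exactly $k$ near $W$.

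The subdifferential argument only needs the local piece in any case. I would also check that Lemma~\ref{lem:dualgraph} only ever invokes fibers inside the patch, with the uniform $\delta$. It does, so restricting everything to $W$ suffices.
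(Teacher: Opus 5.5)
Your argument is correct, but it takes a different route from the paper's proof of this theorem. The paper stays within differential calculus:
\begin{itemize}
\item Lemma~\ref{lem:patch} uses the $C^1$ loss to write the expected loss as $c(r)+a(r)\cdot x$.
\item It shows $a$ is injective on attained reports and passes to a patch where $a$ embeds the constant-rank report manifold $\Gamma(V)$.
\item Differentiating the first-order condition along fibers gives $T_{a(\Gamma(x))}S=(\ker D\Gamma(x))^\perp$.
\item Lemmas~\ref{lem:translation} and \ref{lem:meanflow} then supply the $e_1$-translation and a strictly moving mean, the latter from strict-optimality inequalities in $c$.
\end{itemize}

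You instead use the $C^1$ report map only to reproduce, on a convex max-rank ball $W$, the conclusions of Lemmas~\ref{lem:affinefibers} and \ref{lem:continuousfibers}: fibers equal to $W\cap(x+\ker D\Gamma(x))$, with continuously varying directions. You then run Lemma~\ref{lem:dualgraph} and the proof of Theorem~\ref{thm:identifiablecentral} with $U$ replaced by $W$. That replacement is legitimate for two reasons. Subgradients of a convex $f$ relative to the open set $W$ coincide with those relative to $U$. And every step of the dual-graph lemma (strict convexity of $g$, the uniform $\delta$, constancy of $\partial f$ along fibers) uses only the local flat structure inside $W$.

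What your route buys is generality. As you note, it never uses smoothness of $L$ or openness of $\mathcal R$, so it proves the bound for any unique elicitation with a $C^1$ report map, and it ties the appendix to the main theorem. What the paper's route buys is self-containment with elementary tools: no Hahn--Banach extension, no almost-everywhere differentiability, and no monotonicity estimate. Its coefficient manifold is also the image of actual reports rather than a constructed graph of supporting coefficients.

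One claim in your last paragraph is not justified by what you give, although you do not need it. Convexity shows that the full fiber $\Gamma^{-1}(r)\cap U$ lies in $x_r+K$. It does not show that the fiber equals $U\cap(x_r+K)$, since you have no control of the fiber outside $W$, where the rank of $D\Gamma$ may drop. What you actually need is $\Gamma^{-1}(r)\cap W=W\cap(x_r+K)$, which holds for three reasons:
\begin{itemize}
\item the set is relatively open in $W\cap(x_r+K)$, as a convex $k$-manifold;
\item it is relatively closed there, by continuity of $\Gamma$;
\item $W\cap(x_r+K)$ is connected.
\end{itemize}
So simply restrict the whole elicitation to $W$ and drop the global claim.
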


The lower bound is local. It holds even when $U$ is an arbitrarily small
open convex neighborhood of one positive probability vector. It therefore
also applies to a larger distribution class whenever a proposed procedure
restricts to Definition~\ref{def:smooth} on such a neighborhood.

\begin{theorem}\label{thm:standardized}
For every $n\geq3$, the smooth elicitation complexity of $s_n$ on $U$
is exactly $n$.
\end{theorem}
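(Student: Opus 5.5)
The upper bound is immediate: the raw-moment loss \eqref{eq:upper} is a $C^1$ loss, its report map $x\mapsto x$ is $C^1$, and $s_n$ is a function of the raw moments, with $\mu_2>0$ on $U$. For the lower bound I would reduce to Theorem~\ref{thm:central}, in the same way that Theorem~\ref{thm:identifiablestandardized} was reduced to Theorem~\ref{thm:identifiablecentral}.

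Suppose a smooth indirect elicitation of $s_n$ has dimension $d<n$. I would apply Lemma~\ref{lem:standardreduction}. Its proof uses only unique attained minimization of an expected loss: mixtures of two points with a shared report keep that report, by linearity of expected loss and uniqueness of the minimizer. It never uses identification. The lemma therefore yields a nonempty open convex ball $B\subseteq U$ on which $\mu_n$ is constant on every report fiber, so $\mu_n=\tilde\psi\circ\Gamma$ on $B$ for some link $\tilde\psi$.

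Next I would check that restricting to $B$ preserves Definition~\ref{def:smooth}. The report set $\mathcal R$ and the loss $L$ are unchanged, so they stay $C^1$. The restricted report map $\Gamma|_B$ is still $C^1$ and still the unique attained minimizer at each $x\in B$. The link $\tilde\psi$ is defined on $\Gamma(B)$. Since $B$ is itself a nonempty open convex subset of the moment set, Theorem~\ref{thm:central} applies on $B$. This is consistent with the remark after that theorem that its lower bound is local. It gives $d\geq n$, a contradiction.

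The main obstacle is ensuring that Lemma~\ref{lem:standardreduction} is genuinely valid in the smooth setting. I would re-read its proof to confirm that "uniquely elicitable report" there means exactly the argmin condition \eqref{eq:elicitation}, with no identification function involved. I would also confirm that the choice of ball avoiding $s_n=0$ (odd $n$) or the exceptional value \eqref{eq:exceptional} (even $n$) depends only on $s_n$ and not on the report. Both points hold as written, so the reduction goes through verbatim.
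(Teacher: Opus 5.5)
Your proposal is correct and is essentially the paper's own proof. The upper bound comes from the raw-moment loss \eqref{eq:upper}. For the lower bound, Lemma~\ref{lem:standardreduction} (which uses only unique attained minimization, not identification) gives a ball on which the same report determines $\mu_n$, and Theorem~\ref{thm:central} applies there because the restricted loss and report map remain $C^1$.
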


In particular, skewness requires three report coordinates and kurtosis
requires four. Subtracting three from kurtosis does not change its
complexity, because that transformation is invertible.

\subsection{The coefficient manifold}
In the smooth setting of Definition~\ref{def:smooth}, every expected loss
on the fixed finite outcome set has the form
\begin{equation}\label{eq:affine}
 R(r,x):=\E_{P_x}L(r,Y)=c(r)+a(r)\cdot x,
\end{equation}
where $c:\mathcal R\to\R$ and $a:\mathcal R\to\R^n$ are $C^1$.
Indeed, each probability $p_i$ is an affine function of $x$.
The vector $a(r)$ records all coefficients except the intercept.

We first isolate the geometry needed below. The tangent space $T_z S$
consists of velocities of differentiable curves through $z$ in a manifold
$S$. All manifolds in the proof are local embedded $C^1$ manifolds.

\begin{lemma}[A regular coefficient patch]\label{lem:patch}
Assume Definition~\ref{def:smooth}, and suppose $\Gamma$ is nonconstant on every
open subset of $U$. There are an integer $1\leq k\leq d$, an open set
$V\subseteq U$, a $k$-dimensional report manifold $M=\Gamma(V)$, and a
$k$-dimensional coefficient manifold $S=a(M)$ with the following properties.
The restrictions $\Gamma:V\to M$ and $a:M\to S$ are respectively a
submersion and a $C^1$ diffeomorphism. For $x\in V$,
\begin{equation}\label{eq:normal}
 T_{a(\Gamma(x))}S=(\ker D\Gamma(x))^\perp.
\end{equation}
If a $C^1$ function $f$ on $V$ is constant on each report fiber, it factors
through a $C^1$ function on $M$ after shrinking the patch.
\end{lemma}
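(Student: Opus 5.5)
The plan is to use the first-order condition together with the constant-rank theorem. Since each $L(\cdot,y_i)$ is $C^1$ and $\Gamma(x)$ is an interior minimizer on the open set $\mathcal R$, the identity
\[
 D_rR(\Gamma(x),x)=Dc(\Gamma(x))+Da(\Gamma(x))^{\mathsf T}x=0
\]
holds for all $x\in U$. Uniqueness of the minimizer also shows that distinct reports in $\Gamma(U)$ have distinct affine functions $r\mapsto c(r)+a(r)\cdot x$. In fact distinct reports have distinct coefficient vectors $a$ near the patch; this is one of the points to verify below.

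First I would choose the patch. The rank of $D\Gamma$ is lower semicontinuous, so the set where it attains its maximal value $k$ is open and nonempty. By hypothesis $\Gamma$ is nonconstant on open sets, so $k\geq1$. By the constant-rank theorem, after shrinking to an open ball $V$, the image $M=\Gamma(V)$ is a $k$-dimensional embedded $C^1$ manifold. The map $\Gamma:V\to M$ is then a submersion whose fibers are level sets with tangent space $\ker D\Gamma(x)$. Factorization of a $C^1$ function $f$ constant on fibers follows from the same normal form: in straightened coordinates $\Gamma$ is a projection, and $f$ becomes a function of the base coordinates alone, which is $C^1$ there.

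Next I would show $a:M\to S$ is a $C^1$ diffeomorphism onto a $k$-manifold and prove \eqref{eq:normal}. Define $\Phi(x)=a(\Gamma(x))$. The map $x\mapsto R(\Gamma(x),x)=-f(x)$ has derivative $a(\Gamma(x))$ by the envelope argument, since $D_rR=0$ at the optimum. Hence $\Phi=-\nabla f$, and the negative Bayes risk $f$ is $C^1$ on $V$. A direct computation is also available. The map $x'\mapsto R(\Gamma(x),x')-R(\Gamma(x'),x')$ is nonnegative and vanishes at $x'=x$, so its derivative vanishes, giving the same conclusion without second derivatives of $L$.

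Along a fiber, $\Gamma$ is constant, so $\Phi$ is constant and $D\Phi$ kills $\ker D\Gamma$. Conversely, I need to show that the rank of $D(a|_M)$ is exactly $k$. Suppose a curve $r(\tau)$ in $M$ had $a'(0)=0$. Differentiating the first-order condition along a $C^1$ lift $x(\tau)$ with $\Gamma(x(\tau))=r(\tau)$ would give a contradiction with the maximality of rank and uniqueness. This step is delicate without second derivatives.

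My fallback is a monotonicity-type argument in the spirit of \eqref{eq:dualestimate}, where the envelope identity substitutes for the Hessian. Choose two lifts $x,x'$ of nearby reports $r,r'$. Strict uniqueness gives
\[
 (a(r)-a(r'))\cdot(x-x')>0
\]
whenever $r\neq r'$. Taking $x'-x$ transverse to the fiber shows that $a$ is injective on $M$ and that it changes at first order whenever $r$ does. If injectivity of $Da|_{T M}$ still fails, I would shrink to the open set of maximal rank of $a|_M$ and replace $M$ by the corresponding submanifold. This is why the lemma only asserts "some integer $k$" and "after shrinking".

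Granting this, $S=a(M)$ is a $k$-manifold, and $T_{a(\Gamma(x))}S=\operatorname{im}D\Phi(x)$. Since $\Phi=-\nabla f$ has symmetric derivative in the distributional sense, the image of $D\Phi(x)$ is orthogonal to its kernel $\ker D\Gamma(x)$. That orthogonality is the symmetry point. The dimension count $k=n-\dim\ker D\Gamma$ then forces equality in \eqref{eq:normal}.

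The main obstacle is precisely this symmetry and rank step for $\Phi$ when only $C^1$ data are available, since $\nabla f$ need not be differentiable a priori. I would handle it as in Lemma~\ref{lem:dualgraph}. Orthogonality $T S\perp\ker D\Gamma$ follows directly: the coefficient vector is constant on each fiber and the fibers are affine by Lemma~\ref{lem:affinefibers}, whose affine-segment argument uses only unique elicitation. Therefore every difference $a(r')-a(r)$ is annihilated, to first order, by the fiber directions at $r$.
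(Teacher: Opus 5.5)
Your outline matches the paper's: constant rank for $\Gamma$, injectivity of $a$, orthogonality plus a dimension count, and local sections for the factorization. However, the one nontrivial step is left open, where you write ``Granting this, $S=a(M)$ is a $k$-manifold.'' Without $\operatorname{rank}D(a|_M)=k$, symmetry only gives $\operatorname{im}D\Phi(x)\subseteq(\ker D\Gamma(x))^\perp$. So neither the diffeomorphism $a:M\to S$ nor equality in \eqref{eq:normal} follows. Neither of your routes can supply the rank pointwise. Take $\mathcal R=\R$ and $L(r,y)=\tfrac34r^4-r^3y$. The expected loss $\tfrac34r^4-r^3x_1$ has $r$-derivative $3r^2(r-x_1)$, so the unique minimizer is $\Gamma(x)=x_1$, of rank one everywhere. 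Yet $a(r)=-r^3e_1$ has $Da(0)=0$, so $a|_M$ is not an immersion at $r=0$ whenever mean zero is attained in $U$. Thus differentiating the first-order condition along a lift yields no contradiction. Strict monotonicity gives injectivity of $a$ but not first-order change; note its correct sign is $(a(r)-a(r'))\cdot(x-x')<0$, not $>0$. Shrinking the patch is genuinely forced.

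The missing idea is that injectivity forces the maximal rank of $a|_M$ on every open patch to equal $k$. Suppose it were $h<k$. Near a point of rank $h$ (an open condition), the constant-rank theorem would give level sets of $a|_M$ of dimension $k-h>0$, contradicting injectivity. Then restrict to a neighborhood $M'$ of a rank-$k$ point, where $a|_M$ is an immersion and hence locally an embedding, and replace $V$ by $V\cap\Gamma^{-1}(M')$. Replacing $M$ by a lower-dimensional submanifold, as your phrasing suggests, would break \eqref{eq:normal}.

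With that in place the rest of your argument works, and the ``symmetry obstacle'' disappears. $\Phi=a\circ\Gamma$ is a composition of $C^1$ maps, so the Bayes risk is $C^2$ on $V$ and $D\Phi$ is symmetric in the classical sense. More directly, as the paper does, differentiate $Dc(r)+Da(r)^{\mathsf T}x=0$ along a $C^1$ curve inside the fiber of $r$. This gives $Da(r)^{\mathsf T}v=0$ for $v\in\ker D\Gamma(x)$. Do not invoke Lemma~\ref{lem:affinefibers} here: its full-affine-flat conclusion uses an identification function, which this appendix does not assume.
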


\begin{proof}
Let $k$ be the maximum rank of $D\Gamma$ on $U$. If $k=0$, $\Gamma$ is
locally constant, contrary to the assumption. Choose a point of rank $k$.
A nonzero minor and maximality of $k$ give a neighborhood of constant rank.
The constant-rank theorem supplies a local image manifold $M$ and a
submersion onto $M$.

The map $a$ is injective on attainable reports. To see this, suppose
$a(r)=a(s)$ and both reports are attained. If $c(r)<c(s)$, then $s$ cannot
minimize any risk. The reverse inequality excludes $r$. Equality makes
their risks identical and contradicts uniqueness unless $r=s$.

A $C^1$ injective map on a $k$-manifold has rank $k$ somewhere in every
nonempty open patch. Otherwise its maximal rank on that patch is some
$h<k$. Near a point of rank $h$, the constant-rank theorem gives a
positive-dimensional fiber, contradicting injectivity. Apply this fact to
$a|_M$. Restrict to a patch where $a|_M$ is an embedding. It gives the
claimed $S$ and diffeomorphism.

Fix $x\in V$, let $r=\Gamma(x)$, and use local coordinates $u\in\R^k$
on $M$. Optimality along $M$ gives
\begin{equation}\label{eq:foc}
 D_u c(r)+D_u a(r)^{\mathsf T}x=0.
\end{equation}
The same equation holds for every nearby point in the fiber of $r$.
Every $v\in\ker D\Gamma(x)$ is the velocity of a $C^1$ curve in that
fiber. Differentiating \eqref{eq:foc} along the curve yields
$D_u a(r)^{\mathsf T}v=0$. Thus $T_{a(r)}S$ is contained in
$(\ker D\Gamma(x))^\perp$. Both spaces have dimension $k$, so they agree.
Finally, a submersion has local $C^1$ sections. Compose $f$ with a section
to obtain its factor on $M$. Constancy on fibers makes that factor
well-defined.
\end{proof}

The rank argument removes redundant reports before any dimension bound.
It does not infer a positive-definite Hessian from strict optimality.

\begin{lemma}[Constant tangent directions]\label{lem:translation}
Let $S\subseteq\R^N$ be an embedded $C^1$ manifold. Suppose a fixed vector
$v$ belongs to $T_zS$ at every point of a neighborhood in $S$. Near any
point $z_0$ in that neighborhood, translations $z\mapsto z+tv$ preserve
$S$ for small $t$. Moreover,
\begin{equation}\label{eq:fixedtangent}
 T_{z_0+tv}S=T_{z_0}S.
\end{equation}
\end{lemma}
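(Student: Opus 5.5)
Since $S$ is an embedded $C^1$ manifold, we can write it near $z_0$ as a $C^1$ graph over its tangent space, and $v$ will be a coordinate direction of that graph. Put $E=T_{z_0}S$ and $k=\dim S$. Because $v\in E$, we may assume $v\neq0$; the case $v=0$ is trivial. Choose a basis of $E$ whose first vector is $v$, and decompose $\R^N=E\oplus E^\perp$. Near $z_0$, the submanifold $S$ is the graph $\{z_0+u+h(u):u\in B\}$, where $B$ is a small box in $E$ with one axis along $v$ and $h:B\to E^\perp$ is $C^1$ with $h(0)=0$ and $Dh(0)=0$. The tangent space at the graph point over $u$ is
\[
 T S=\{w+Dh(u)w:w\in E\}.
\]

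The first step is to convert the tangency hypothesis into a partial derivative condition. Fix a point $z=z_0+u+h(u)$. By hypothesis $v\in T_zS$, so $v=w+Dh(u)w$ for some $w\in E$. Splitting into the components in $E$ and $E^\perp$ gives $w=v$ and $Dh(u)v=0$. So the directional derivative of $h$ along $v$ vanishes throughout $B$.

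The second step integrates this. Since $B$ is a box with an axis along $v$, each segment $u+[0,t]v$ stays in $B$ while its endpoints do. The fundamental theorem of calculus then gives $h(u+tv)=h(u)$. Consequently
\[
 z+tv=z_0+(u+tv)+h(u+tv)\in S ,
\]
so translation by $tv$ maps the graph points over the shrunken box $\{u:u+tv\in B\}$ into $S$, for every sufficiently small $|t|$. This is the local sense in which translations preserve $S$ near $z_0$. Differentiating the identity $h(u+tv)=h(u)$ in $u$ gives $Dh(u+tv)=Dh(u)$. By the tangent formula above, the tangent space at $z+tv$ equals the tangent space at $z$, and setting $u=0$ gives \eqref{eq:fixedtangent}.

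The only delicate point is the neighborhood bookkeeping. The hypothesis must hold on the whole graph patch, so we shrink the box inside the given neighborhood of $S$. We must also make sure the translated points remain in the same graph chart, which is why we use a box aligned with $v$ rather than a ball. Neither issue requires more than choosing $B$ and the range of $t$ small enough.
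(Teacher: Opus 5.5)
Your proof is correct and follows the paper's argument. You write $S$ near $z_0$ as a $C^1$ graph over $E=T_{z_0}S$, read off $Dh(u)v=0$ from tangency, integrate along $v$ to get $h(u+tv)=h(u)$, and differentiate to obtain $Dh(u+tv)=Dh(u)$. The box aligned with $v$ is harmless but not needed, since a ball is convex and already contains the segment from $u$ to $u+tv$ whenever it contains both endpoints.
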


\begin{proof}
Represent $S$ locally as the graph of a $C^1$ map $g:E\to E^\perp$,
where $E=T_{z_0}S$. Since $v\in E$, tangency at each graph point means
$Dg(u)v=0$. On a sufficiently small ball in $E$, integration along the
$v$ direction gives $g(u+tv)=g(u)$. Differentiate this identity with
respect to $u$ to obtain $Dg(u+tv)=Dg(u)$. The graph and its tangent
space therefore remain unchanged under the local translations.
\end{proof}

The next observation relates these translations to an elicited expectation.
It uses strict optimality, not second derivatives.

\begin{lemma}[Means change along coefficient translations]\label{lem:meanflow}
Suppose the mean $x_1$ is constant on report fibers in the patch of
Lemma~\ref{lem:patch}. Let $m:S\to\R$ denote this recovered mean.
Then $m$ is $C^1$. If $a_0+te_1\in S$ for an interval of $t$, the
function $t\mapsto m(a_0+te_1)$ is strictly decreasing.
\end{lemma}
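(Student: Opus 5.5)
Define $m$ on $S$ by $m(a(r))=x_1$ for any $x$ in the fiber $\Gamma^{-1}(r)\cap V$. Lemma~\ref{lem:patch} makes this well defined: $a:M\to S$ is a $C^1$ diffeomorphism, and $x_1$ is constant on report fibers. For regularity, apply the last assertion of Lemma~\ref{lem:patch} to the $C^1$ function $x\mapsto x_1$ on $V$. It factors as $x_1=\tilde m(\Gamma(x))$ with $\tilde m\in C^1(M)$. Then $m=\tilde m\circ(a|_M)^{-1}$ is $C^1$ on $S$. This may require shrinking the patch, which is harmless since the statement is local.

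For monotonicity, set $a(t)=a_0+te_1\in S$. Let $r(t)=(a|_M)^{-1}(a(t))$, which is a $C^1$ curve in $M$. Choose $x(t)$ in the fiber of $r(t)$ inside $V$. I will not need $x(t)$ to be continuous; only its mean $m(t)=x(t)_1$ enters. Use strict optimality in the affine form \eqref{eq:affine}. For $t\neq t'$, the report $r(t)$ is the unique minimizer at $x(t)$ and $r(t')\neq r(t)$, by injectivity of $a$ on $S$. This gives
\[
 c(r(t))+a(t)\cdot x(t)<c(r(t'))+a(t')\cdot x(t).
\]
Swap the roles of $t$ and $t'$ and add the two strict inequalities. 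The intercepts cancel, leaving
\[
 0<(a(t')-a(t))\cdot(x(t)-x(t'))=(t'-t)\bigl(m(t)-m(t')\bigr).
\]
Hence $t<t'$ implies $m(t)>m(t')$, so $m$ is strictly decreasing along the translation. No second-order information is used; the argument is the standard cyclic-monotonicity trick for the subgradient map $x\mapsto -a(\Gamma(x))$ of the negative Bayes risk.

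The main point to check is that the reports are distinct and attained for distinct $t$. That is what upgrades the weak monotonicity inequality to a strict one. Distinctness follows because $a_0+te_1$ are distinct points of $S$ and $a|_M$ is a bijection onto $S$. Attainment holds because every point of $M=\Gamma(V)$ is the report of some $x\in V$. Without these two facts we would only get weak monotonicity.
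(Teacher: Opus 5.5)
Your proposal is correct and follows the paper's proof. The $C^1$ regularity comes from the factorization in Lemma~\ref{lem:patch} composed with $(a|_M)^{-1}$, and strict monotonicity comes from combining the two strict optimality inequalities at points in the fibers of the two distinct attained reports; the paper subtracts them after isolating $\Delta c$, whereas you add them so the intercepts cancel. One small wording point: $r(t)\neq r(t')$ holds simply because their coefficients $a_0+te_1$ and $a_0+t'e_1$ differ, while injectivity of $a$ is what makes $r(t)$ well defined.
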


\begin{proof}
The factorization assertion in Lemma~\ref{lem:patch}, followed by
$(a|_M)^{-1}$, gives the $C^1$ function $m$.
Take $t_2>t_1$, let $r_i$ be the report at coefficient $a_0+t_i e_1$,
and choose $x_i$ with $\Gamma(x_i)=r_i$.
Write $\delta=t_2-t_1>0$ and $\Delta c=c(r_2)-c(r_1)$.
The reports differ. Strict optimality at $x_1$ and $x_2$ gives
\begin{equation}\label{eq:strict}
 \Delta c+\delta(x_1)_1>0,
 \qquad
 \Delta c+\delta(x_2)_1<0.
\end{equation}
Subtracting yields $(x_2)_1<(x_1)_1$, as required.
\end{proof}

\subsection{Central and standardized moments}

\begin{proofof}{Theorem~\ref{thm:central}}
Assume a smooth indirect elicitation in dimension $d$ exists.
Since $\partial F_n/\partial x_n=1$, $\Gamma$ is nonconstant on every
open set. Take the patch in Lemma~\ref{lem:patch}.
Mean recovery implies
\begin{equation}\label{eq:eone}
 e_1\in T_{a(\Gamma(x))}S.
\end{equation}
Also, $F_n$ is constant on each report fiber, so its gradient is
orthogonal to $\ker D\Gamma(x)$. Remove its first coordinate by using
\eqref{eq:eone}. Equation~\eqref{eq:normal} then gives
\begin{equation}\label{eq:btangent}
 b_n(x_1):=\left(0,
       \left(\binom{n}{j}(-x_1)^{n-j}\right)_{j=2}^n\right)
 \in T_{a(\Gamma(x))}S.
\end{equation}
For $n=2$, the expression is simply $b_2(x_1)=e_2$.

Fix $a_0\in S$. Lemma~\ref{lem:translation} gives an interval
$a_0+te_1\in S$ with one fixed tangent space $E=T_{a_0}S$.
By Lemma~\ref{lem:meanflow}, the recovered mean varies continuously
and strictly along this interval. Its values therefore contain an open
interval $I\subseteq\R$. Equation~\eqref{eq:btangent} implies
\begin{equation}\label{eq:span}
 e_1\in E,\qquad b_n(m)\in E\quad\text{for every }m\in I.
\end{equation}
The last $n-1$ coordinates of $b_n(m)$ are nonzero multiples of the
monomials $m^{n-2},m^{n-3},\ldots,1$. Their vectors span $\R^{n-1}$
as $m$ ranges over any open interval. For example, evaluation at $n-1$
distinct points gives an invertible Vandermonde matrix. Thus
\eqref{eq:span} forces $E=\R^n$ and $n=k\leq d$.

The loss in \eqref{eq:upper} supplies the upper bound.
Its raw-moment report is smooth.
\end{proofof}

\begin{proofof}{Theorem~\ref{thm:standardized}}
The upper bound follows from \eqref{eq:upper}.
For the lower bound, Lemma~\ref{lem:standardreduction} gives an open convex
ball on which the same report determines $\mu_n$.
The restricted loss and report map remain $C^1$.
Theorem~\ref{thm:central} therefore gives $d\geq n$.
\end{proofof}

\subsection{Why the report-map assumption is separate}
Smoothness of the loss alone does not imply a smooth or even continuous
minimizer map. For example, let $Y\in\{-1,1\}$ have mean
$m\in(-1/4,1/4)$, and let
$\mathcal R=(-1,1)\cup(-\infty,-2)\cup(2,\infty)$. Define
\begin{equation}\label{eq:discontinuous}
 L(r,y)=\begin{cases}
 r^2,& |r|<1,\\
 r^{-2}-2r^{-1}y,& |r|>2.
 \end{cases}
\end{equation}
This loss is smooth on its open report domain. Its unique optimal report
is $1/m$ when $m\ne0$ and zero when $m=0$. The map is discontinuous
at zero. This example explains why the two $C^1$ assumptions are separate.
It is semialgebraic, so it is within the scope of the definable theorem
when that theorem's target and support conditions hold.

\section{Definable losses and report sets}\label{app:definable}
The next result uses a precise class of sets and functions that admit
finite smooth decompositions. A semialgebraic set is a finite Boolean
combination of polynomial equalities and inequalities. A function is
semialgebraic when its graph is semialgebraic. More generally, fix an
o-minimal expansion of the real field. Its definable sets are closed under
Boolean operations, Cartesian products, and coordinate projections.
Every definable subset of the line is a finite union of points and intervals.
Real parameters are allowed. A function is definable if its graph is.
The semialgebraic sets give one such structure.

\begin{theorem}\label{thm:definable}
Let $\mathcal R\subseteq\R^d$ be a definable report set, and let every
finite-valued loss coordinate $L(\cdot,y_i)$ be definable in the same
o-minimal structure. Suppose the expected loss has a unique attained
minimizer for each distribution in $U$. If these minimizers determine
$\mu_n$, where $n\geq2$, or $s_n$, where $n\geq3$, then $d\geq n$.
The bound is attained by a semialgebraic loss.
\end{theorem}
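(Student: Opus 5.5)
The plan is to reduce Theorem~\ref{thm:definable} to the smooth case of Theorem~\ref{thm:central}. O-minimal cell decomposition gives the needed smoothness of the loss and the report map on some open patch. The central-moment proof is local, so a patch suffices. The upper bound is the semialgebraic raw-moment loss \eqref{eq:upper}. For the standardized moment $s_n$, first apply Lemma~\ref{lem:standardreduction}. It uses only unique elicitation, so we may shrink to a ball on which the report determines $\mu_n$. It then suffices to treat $\mu_n$ on an open convex ball, still called $U$.

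First, the minimizer map $\Gamma$ is definable. Its graph is $\{(x,r):r\in\mathcal R,\ R(r,x)\leq R(r',x)\ \forall r'\in\mathcal R\}$, where $R(r,x)=\sum_i p_i(x)L(r,y_i)$ and the $p_i$ are affine in $x$. This set is first-order definable. By o-minimal cell decomposition (\citet{vdD1998}), $\Gamma$ is $C^1$ on an open dense definable subset of $U$. We take an open ball $B$ inside it.

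Second, we need the loss to be $C^1$ near the relevant reports, and $\mathcal R$ to be open. This is the main obstacle. The loss coordinates are $C^1$ on a dense open subset $\mathcal R'$ of the interior of each top-dimensional cell, but $\Gamma(B)$ may lie in a lower-dimensional cell or in the nonsmooth locus. My first attempt is to decompose $\mathcal R$ into finitely many $C^1$ cells on which each $L(\cdot,y_i)$ is $C^1$. The cells are compatible with $\Gamma(B)$, and each is $C^1$-diffeomorphic to an open box $\R^{e}$ with $e\leq d$.

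The preimages $\Gamma^{-1}(C)$ cover $B$, so one of them has nonempty interior. Shrink $B$ to a smaller ball inside it. Reparametrize that cell $C$ by its chart $\phi:\R^e\supseteq O\to C$. The restricted loss $L(\phi(\cdot),y_i)$ is then $C^1$ on the open set $O$.

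The minimizer over all of $\mathcal R$ lies in $C$ and is unique. It is therefore the unique minimizer over $C$. The map $\phi^{-1}\circ\Gamma$ is definable, and we shrink once more so that it is $C^1$. This gives a smooth indirect elicitation in the sense of Definition~\ref{def:smooth} on the ball, with report dimension $e\leq d$. The link is $\psi\circ\phi$.

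Finally, apply Theorem~\ref{thm:central}. Its lower bound is local and holds on any open convex neighborhood, so it yields $n\leq e\leq d$. The care needed is that the proof of Theorem~\ref{thm:central} uses only objects on the patch. These are the affine form \eqref{eq:affine}, the first-order condition \eqref{eq:foc} along the report manifold, and strict optimality in Lemma~\ref{lem:meanflow}. Strict optimality compares reports within $C$, and it still holds after restricting to $C$. Hence restriction to a cell does not break the argument, and the theorem follows.
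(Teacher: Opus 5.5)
Your proposal is correct and follows essentially the paper's route. The steps match: the minimizer map is definable, $\mathcal R$ gets a $C^1$ cell decomposition on which the loss coordinates are $C^1$, some preimage cell has nonempty interior, and a chart of that cell yields an instance of Definition~\ref{def:smooth} on a ball, to which Theorem~\ref{thm:central} applies. Your handling of $s_n$ through Lemma~\ref{lem:standardreduction} is what the paper does via Theorem~\ref{thm:standardized}. The one step to add is the paper's opening move of replacing $U$ by an open ball inside it. An arbitrary open convex $U$ need not be definable, and your claims that $\Gamma$ is definable and is $C^1$ on an open dense definable set need a definable domain.
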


The report set need not be open or connected. Neither the loss nor the
minimizer map needs to be continuous. The final link remains arbitrary.
This theorem covers polynomial and rational formulas on their domains,
and finite piecewise polynomial formulas. It also covers finite combinations
of absolute values, maxima, and minima. It applies in
other o-minimal expansions when the loss and report set are definable there.

We use smooth cell decomposition in a fixed o-minimal expansion of the
real field \citep{vdD1998}. A definable set has a finite partition into
$C^1$ cells. Given finitely many definable functions, one can choose the
partition so that their restrictions to each cell are $C^1$.
A definable map on an open set is therefore $C^1$ on some nonempty open
cell. A definable subset of $\R^n$ has dimension $n$ exactly when it has
nonempty interior, and dimension respects finite unions.

\begin{proofof}{Theorem~\ref{thm:definable}}
Restrict $U$ to a nonempty open ball whose closure lies in $U$.
This ball is semialgebraic, hence definable even if the original $U$ is not.
Expected risk is a definable function of $(r,x)$ by
\eqref{eq:affine}. The minimizer graph is the definable set
\begin{equation}\label{eq:defgraph}
 \{(x,r):x\in U,\ r\in\mathcal R,\
       R(r,x)\leq R(s,x)\text{ for every }s\in\mathcal R\}.
\end{equation}
Existence and uniqueness make this the graph of a definable map $\Gamma$.

Partition $\mathcal R$ into finitely many $C^1$ cells $Q$ on which all
loss coordinates are $C^1$. The definable sets $\Gamma^{-1}(Q)$ cover
the open ball. At least one has nonempty interior. Inside that interior,
choose a nonempty open cell $C$ on which $\Gamma$ is $C^1$.
Fix $x_0\in C$, and take a local $C^1$ chart
$\chi:O\subseteq\R^s\to Q$ around $\Gamma(x_0)$, with $s=\dim Q\leq d$.
Continuity of $\Gamma$ on $C$ permits a further restriction to an open
convex ball $B\subseteq C$ with $\Gamma(B)\subseteq\chi(O)$.

On $B$, the report map $\chi^{-1}\circ\Gamma$ and the loss coordinates
$L(\chi(\cdot),y_i)$ are $C^1$. Their report domain $O$ is open in
$\R^s$. The original unique global minimizer remains the unique minimizer
over $\chi(O)$, since it lies there. Thus they satisfy
Definition~\ref{def:smooth} on $B$. Theorem~\ref{thm:central} or
Theorem~\ref{thm:standardized} gives $n\leq s\leq d$.
The loss in \eqref{eq:upper} attains the bound and is semialgebraic.
\end{proofof}

\end{document}